\theoremstyle{plain}
\newtheorem{theorem}{Theorem}[section]
\newtheorem{cor}[theorem]{Corollary}
\newtheorem{prop}[theorem]{Proposition}
\newtheorem{lemma}[theorem]{Lemma}
\theoremstyle{definition}
\newtheorem{examples}[theorem]{Examples}
\newtheorem{definition}[theorem]{Definition}
\newtheorem{remark}[theorem]{Remark}
\DeclareMathOperator{\supp}{supp}
\DeclareMathOperator{\na}{NA}
\DeclareMathOperator{\nra}{NRA}
\title[On the BPBp-nu for compact operators]{On the compact operators case of the Bishop--Phelps--Bollob\'{a}s property for numerical radius}
\author[D.\ Garc\'{\i}a]{Domingo Garc\'{\i}a}
\address[Domingo Garc\'{\i}a]{Departamento de An\'{a}lisis Matem\'{a}tico,
Universidad de Valencia, Doctor Moliner 50, 46100 Burjasot (Valencia), Spain
\href{http://orcid.org/0000-0001-5291-6705}{ORCID: \texttt{0000-0002-2193-3497} }}
\email{domingo.garcia@uv.es}
\author[M.\ Maestre]{Manuel Maestre}
\address[Manuel Maestre]{Departamento de An\'{a}lisis Matem\'{a}tico,
Universidad de Valencia, Doctor Moliner 50, 46100 Burjasot
(Valencia), Spain
\href{http://orcid.org/0000-0002-2193-3497}{ORCID: \texttt{0000-0001-5291-6705} }}
\email{manuel.maestre@uv.es}
\author[M.\ Mart\'{\i}n]{Miguel Mart\'{\i}n}
\address[Miguel Mart\'{\i}n]{Departamento de An\'{a}lisis Matem\'{a}tico, Facultad de
 Ciencias, Universidad de Granada, 18071 Granada, Spain.
\href{http://orcid.org/0000-0003-4502-798X}{ORCID: \texttt{0000-0003-4502-798X} }}
\email{mmartins@ugr.es}
\author[O.\ Rold\'an]{\'Oscar Rold\'an}
\address[\'Oscar Rold\'an]{Departamento de An\'{a}lisis Matem\'{a}tico,
Universidad de Valencia, Doctor Moliner 50, 46100 Burjasot (Valencia), Spain.
\href{https://orcid.org/0000-0002-1966-1330}{ORCID: \texttt{0000-0002-1966-1330} }}
\email{oscar.roldan@uv.es}
\thanks{The first and second authors were supported by MINECO and FEDER project MTM2017-83262-C2-1-P and by Prometeo PROMETEO/2017/102.  The third author was supported by projects PGC2018-093794-B-I00 (MCIU/AEI/FEDER, UE), A-FQM-484-UGR18 (Universidad de Granada and Junta de Analuc\'{\i}a/FEDER, UE), and FQM-185 (Junta de Andaluc\'{\i}a/FEDER, UE). The fourth author was supported by the Spanish Ministerio de Ciencia, Innovaci\'on y Universidades, grant FPU17/02023, and by  MINECO and FEDER project MTM2017-83262-C2-1-P}
\subjclass[2020]{Primary: 46B04;  Secondary: 46B20, 46B25, 46B28}
\date{February 21, 2021}
\keywords{Banach space; compact operator; Bishop-Phelps-Bollob\'{a}s property; numerical radius attaining operator; approximation property}
\begin{document}
	
\begin{abstract}
We study the Bishop-Phelps-Bollob\'as property for numerical radius restricted to the case of compact operators (BPBp-nu for compact operators in short). We show that $C_0(L)$ spaces have the BPBp-nu for compact operators for every Hausdorff topological locally compact space $L$. To this end, on the one hand, we provide some techniques allowing to pass the BPBp-nu for compact operators from subspaces to the whole space and, on the other hand, we prove some strong approximation property of $C_0(L)$ spaces and their duals. Besides, we also show that real Hilbert spaces and isometric preduals of $\ell_1$ have the BPBp-nu for compact operators.
\end{abstract}

\maketitle
\section{Introduction, notation, and known results}\label{SectionIntroduction}

First we fix some notation in order to be able to describe our aims and results with precision. Given a Banach space $X$ over the field $\mathbb{K}$ of real or complex numbers, we denote by $X^*$, $B_X$, and $S_X$, its topological dual, its closed unit ball, and its unit sphere, respectively. If $Y$ is another Banach space, $\mathcal{L}(X,Y)$ represents the space of all bounded and linear operators from $X$ to $Y$, and we denote by $\mathcal{K}(X, Y)$ the space of compact operators from $X$ to $Y$. When $Y=X$, we shall simply write $\mathcal{L}(X)=\mathcal{L}(X,X)$ and $\mathcal{K}(X)=\mathcal{K}(X,X)$. Given a locally compact Hausdorff topological space $L$, $C_{0}(L)$ is the Banach space of all scalar-valued continuous functions on $L$ vanishing at infinity.

Given an operator $T\in \mathcal{L}(X)$, its \emph{numerical radius} is defined as
$$
\nu(T):= \sup \left\{ |x^*(T(x))|\colon  (x,\, x^*)\in \Pi(X) \right\},
$$
where $\Pi(X):= \left\{ (x,\, x^*)\in S_X \times S_{X^*}\colon  x^*(x)=1 \right\}$. It is immediate that $\nu(T)\leq \| T \|$ for every $T\in \mathcal{L}(X)$ and that $\nu$ is a seminorm on $\mathcal{L}(X)$. Very often, $\nu$ is actually a norm on $\mathcal{L}(X)$ equivalent to the usual operator norm. The \emph{numerical index} of the space $X$ measures this fact and it is given by
\begin{align*}
 n(X)&:=   \inf \{ \nu(T)\colon T\in \mathcal{L}(X),\, \| T\| = 1\} \\
   & \,= \max \{ k\geq 0\colon k\| T \| \leq \nu(T),\, \forall T\in  \mathcal{L}(X)\}.
\end{align*}
It is clear that $0\leq n(X)\leq 1$ and that $n(X)>0$ if and only if the numerical radius is a norm on $\mathcal{L}(X)$ equivalent to the operator norm. As in this paper we will mainly deal with compact operators, we will also need the following concept from \cite{CMM}. Given a Banach space $X$, the \emph{compact numerical index} of $X$ is
\begin{align*}
n_K(X)&:= \inf \{ \nu(T)\colon T\in \mathcal{K}(X),\, \| T\| = 1\} \\
&= \max \{ k\geq 0\colon k\| T \| \leq \nu(T),\, \forall T\in \mathcal{K}(X)\}.
\end{align*}
We refer the reader to \cite{CMM}, \cite{KaMaPa}, \cite{KaMaMeQue-diss}, \cite[Subsection~1.1]{KLMM}, and references therein for more information and background.

An operator $S\in\mathcal{L}(X,\, Y)$ is said to \emph{attain its norm} whenever there exists some $x\in S_X$ such that $\| S \| = \| S(x) \|$. An operator $T\in \mathcal{L}(X)$ is said to \emph{attain its numerical radius} whenever there exists some $(x,\, x^*)\in \Pi(X)$ such that $\nu(T)=|x^*(T(x))|$. The sets of norm attaining operators from $X$ to $Y$ and of numerical radius attaining operators on $X$ will be denoted, respectively, by $\na(X,\, Y)$ and $\nra(X)$.

In 1961, Bishop and Phelps \cite{BP} proved that the set $\na(X,\mathbb{K})$ of norm attaining functionals on a Banach space $X$ is always dense in $X^*$. However, this result has been shown to fail for general operators between Banach spaces, as Lindenstrauss \cite{L} proved in 1963. We refer the reader to the survey \cite{Ac2006} for more information and background on the density of norm attaining operators, and to \cite{Ma2016} for the compact operators version.

In 1970, Bollob\'{a}s \cite{B} gave a refinement of the Bishop-Phelps Theorem, showing that you can approximate simultaneously a functional and a point where it almost attains its norm by a norm-attaining funcional and a point where the new functional attains its norm, respectively. In order to extend Bollob\'{a}s' result to norm attaining operators between Banach spaces, Acosta, Aron, Garc\'{\i}a and Maestre \cite{AAGM} introduced in 2008 the Bishop-Phelps-Bollob\'as property as follows.

\begin{definition}[\textrm{\cite{AAGM}}]
A pair of Banach spaces $(X, Y)$ has the \emph{Bishop-Phelps-Bollob\'as property} (\emph{BPBp} for short) if given $\varepsilon\in (0,1)$, there exists $\eta(\varepsilon)\in (0,1)$ such that whenever $T\in \mathcal{L}(X,\, Y)$ and $x_0\in S_X$ satisfy $\| T\| = 1$ and $\| T(x_0)\| > 1-\eta(\varepsilon)$, there are $S\in \mathcal{L}(X,\, Y)$ and $x_1\in S_X$ such that
$$\| S \| = \| S(x_0)\| = 1,\quad \|x_0 - x_1\| < \varepsilon,\quad \| S - T\| < \varepsilon.$$
If the above property holds when we restrict the operators $T$ and $S$ to be compact, we say that the pair $(X,Y)$ has the \emph{Bishop-Phelps-Bollob\'{a}s property for compact operators} (\emph{BPBp for compact operators} for short).
\end{definition}

With the above notation, the result by Bollob\'{a}s just says that the pair $(X,\mathbb{K})$ has the Bishop-Phelps-Bollob\'{a}s property for every Banach space $X$. In the paper \cite{AAGM} a variety of pairs of spaces satisfying the BPBp are provided, together with examples of pairs $(X,Y)$ of Banach spaces failing the BPBp for which $\na(X,Y)$ is dense in $\mathcal{L}(X,Y)$. We refer the reader to the survey \cite{Ac2019} and the paper \cite{ACKLM} for more information and background on the BPBp.

Motivated by this property, Guirao and Kozhushkina \cite{GK} introduced in 2013 the Bishop-Phelps-Bollob\'as property for numerical radius as follows.

\begin{definition}[\textrm{\cite{GK}}]
A Banach space $X$ is said to have the \emph{Bishop-Phelps-Bollob\'as property for numerical radius} (\emph{BPBp-nu} for short) if for every $0<\varepsilon < 1$, there exists $\eta(\varepsilon)\in (0,1)$ such that whenever $T\in \mathcal{L}(X)$ and $(x,\, x^*)\in \Pi(X)$ satisfy $\nu(T)=1$ and $|x^*(T(x))|>1-\eta(\varepsilon)$, there exist $S\in \mathcal{L}(X)$ and $(y,\, y^*)\in \Pi(X)$ such that
$$\nu(S)=|y^*(S(y))|=1,\quad \| T-S\|<\varepsilon,\quad \| x-y\|<\varepsilon,\quad \| x^*-y^*\|<\varepsilon.$$
\end{definition}

Since then, several works have been done in order to study what spaces satisfy that property. We summarize next some of the most important results on the matter:
\begin{enumerate}
\item The spaces $c_0$ and $\ell_1$ have the BPBp-nu \cite{GK}.
\item $L_1(\mathbb{R})$ has the BPBp-nu \cite{F}.
\item Finite-dimensional spaces have the BPBp-nu \cite{KLM14}.\item The real or complex space $L_p(\mu)$ has the BPBp-nu for every measure $\mu$ when $1\leq p<\infty$ (\cite[Example 8]{KLM14} except for the real case $p=2$, which is covered in \cite[Corollary 3.3]{KLMM}).
\item Any uniformly convex and uniformly smooth Banach space $X$ with $n(X)>0$ has the BPBp-nu \cite{KLM14}.
\item Every separable infinite-dimensional Banach space can be renormed to fail the BPBp-nu \cite{KLM14}, even though the set of numerical radius attaining operators is always dense in spaces with the Radon-Nikod\'ym property.
\item The real space $C(K)$ has the BPBp-nu under some extra conditions on the compact Hausdorff space $K$ (for example, when $K$ is metrizable) \cite{AGR}. Let us comment that it remains unknown if the result is true for all compact Hausdorff spaces or what happens in the complex case.
\end{enumerate}

We refer the interested reader to the cited papers \cite{AGR, F, GK, KLM14, KLMM} and the papers \cite{AFSM, CDJM} and references therein for more information and background.

In 2018, Dantas, Garc\'ia, Maestre and Mart\'in \cite{DGMM} studied the BPBp for compact operators. They presented some abstract techniques (based on results about norm attaining compact operators by Johnson and Wolfe \cite{JW}) which allow to carry the BPBp for compact operators from sequence spaces (such as $c_0$ and $\ell_p$) to function spaces (as $C_0(L)$ and $L_p(\mu)$). As one of the main results, it is shown in \cite{DGMM} that the BPBp for compact operators of the pair $(c_0,Y)$ is sufficient to get the BPBp for compact operators of all the pairs  $(C_0(L),Y)$ regardless of the locally compact Hausdorff topological space $L$.

Our aim in this paper is to study the following property, inspired both by the BPBp for compact operators and by the BPBp for numerical radius.

\begin{definition}\label{definition_BPBp-nu-K}
A Banach space $X$ is said to have the \emph{BPBp-nu for compact operators} if for every $0<\varepsilon < 1$, there exists $\eta(\varepsilon)\in (0,1)$ such that whenever $T\in \mathcal{K}(X)$ and $(x,\, x^*)\in \Pi(X)$ satisfy $\nu(T)=1$ and $|x^*(T(x))|>1-\eta(\varepsilon)$, there exist $S\in \mathcal{K}(X)$ and $(y,\, y^*)\in \Pi(X)$ such that
$$\nu(S)=|y^*(S(y))|=1,\quad \| T-S\|<\varepsilon,\quad \| x-y\|<\varepsilon,\quad \| x^*-y^*\|<\varepsilon.$$
\end{definition}

The first work where a somewhat similar property was introduced is \cite{AFSM}, where the BPBp-nu for subspaces of $\mathcal{L}(X)$ was defined and studied in the case of $L_1(\mu)$, with $\mu$ a finite measure. Let us provide a list of spaces that are known to have the BPBp-nu for compact operators.

\begin{examples}\label{EjemplosIniciales}
The following spaces have the BPBp-nu for compact operators:
\begin{enumerate}[(a)]
\item Finite dimensional spaces \cite[Proposition 2]{KLM14}.
\item $c_0$ and $\ell_1$ (adapting the proofs given in \cite[Corollaries 3.3 and 4.2]{GK}).
\item $L_1(\mu)$ for every measure $\mu$ (using \cite[Corollary 2.1]{AFSM} for finite measures and adapting \cite[Theorem 9]{KLM14} to compact operators for the general case).
\end{enumerate}
\end{examples}


Adapting the results from \cite{KLM14} and \cite{KLMM}, one also has that the $L_p(\mu)$ spaces have the BPBp-nu for compact operators when $1<p<\infty$. However, the adaptation to the compact operators case of the proofs in \cite{KLM14} and \cite{KLMM} needs to introduce some terminology. Therefore, we enounce the result here but we pospone the proof to Subsection~\ref{Subsec:LpHilbert}.

\begin{prop}\label{prop:LpProp}
$L_p(\mu)$ has the BPBp-nu for compact operators, for every measure $\mu$ and  $1<p<\infty$.
\end{prop}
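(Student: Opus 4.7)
The plan is to split into two subcases depending on whether $n(L_p(\mu))$ is positive and to carry the existing BPBp-nu proofs into the compact operator setting. For every $1<p<\infty$, the space $L_p(\mu)$ is uniformly convex and uniformly smooth, and $n(L_p(\mu))>0$ holds in every case except real $L_p(\mu)$ with $p=2$ (that is, a real Hilbert space of dimension at least two).

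In the subcase $n(L_p(\mu))>0$, I would rerun the proof of the main theorem of \cite{KLM14} (every uniformly convex and uniformly smooth Banach space with positive numerical index has the BPBp-nu). That argument produces the approximating operator $S$ in the form $S=T+R$, where $R$ is a finite-rank operator crafted from the data $(x,x^*)$, $Tx$, and a functional that norms $Tx$; in particular $R\in\mathcal{K}(L_p(\mu))$, so that $T\in\mathcal{K}(L_p(\mu))$ forces $S\in\mathcal{K}(L_p(\mu))$, and the whole proof can be run inside $\mathcal{K}(L_p(\mu))$. The quantitative modulus $\eta(\varepsilon)$ obtained this way depends only on $\varepsilon$, on $n(L_p(\mu))$, and on the moduli of uniform convexity and smoothness of $L_p(\mu)$, all three of which are insensitive to the measure $\mu$.

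In the remaining subcase of a real Hilbert space $H$, one has $n(H)=0$ as soon as $\dim H\geq 2$, so the above approach is no longer available. The argument of \cite[Corollary 3.3]{KLMM} circumvents this obstruction by exploiting that the compact numerical index $n_K(H)$ is strictly positive for every real Hilbert space $H$. I would adapt that proof by substituting $n_K$ for $n$ throughout and restricting attention from the outset to operators in $\mathcal{K}(H)$: every estimate of the form $\nu(T)\geq n(X)\|T\|$ used in \cite{KLM14} survives as $\nu(T)\geq n_K(X)\|T\|$ provided $T\in\mathcal{K}(X)$, and the finite-rank perturbation giving $S$ keeps $S$ compact. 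The chief obstacle is bookkeeping: one has to confirm that this replacement is legitimate at every step of \cite{KLM14} and that the resulting modulus $\eta(\varepsilon)$ depends only on $\varepsilon$ and on universal Hilbert-space constants, so that uniformity in $\mu$ is preserved.
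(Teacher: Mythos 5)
Your first subcase (all complex $p$, and real $p\neq 2$) is essentially the paper's own route: one reruns \cite[Propositions 4 and 6]{KLM14} observing that the perturbing operators are compact whenever $T$ is, and uses $n_K(L_p(\mu))\geq n(L_p(\mu))>0$ in the real case $p\neq 2$ and $n_K\geq 1/\mathrm{e}$ in the complex case. That part is fine.

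The real Hilbert space case, however, rests on a false premise. You claim that the compact numerical index $n_K(H)$ is strictly positive for every real Hilbert space, but if $\dim H\geq 2$ and $e_1,e_2$ are orthonormal, the rank-two operator $T(x)=(x|e_1)e_2-(x|e_2)e_1$ is compact with $\|T\|=1$ and $(Tx|x)=0$ for all $x$, so $\nu(T)=0$ and hence $n_K(H)=0$. Consequently the substitution of $n_K$ for $n$ is vacuous: the inequality $\nu(T)\geq n_K(H)\|T\|$ says nothing, and the passage from the weak to the full property in \cite[Proposition 6]{KLM14}, which rescales $S$ to $S/\nu(S)$ and controls $\|S/\nu(S)-S\|$ through $\|S\|\leq\nu(S)/n_K$, collapses. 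Moreover, \cite[Corollary 3.3]{KLMM} does not exploit positivity of any numerical index of $H$ in your sense; it exploits the \emph{second} numerical index. The paper's actual argument introduces the second numerical index for compact operators, $n'_K(X)=\inf\bigl\{\nu(T)\colon T\in\mathcal{K}(X),\ \|T+\mathcal{Z}_K(X)\|=1\bigr\}$, where $\mathcal{Z}_K(X)$ is the set of skew-hermitian compact operators (precisely the kernel of the seminorm $\nu$), proves $n'_K(H)=1$ for real Hilbert spaces by adapting \cite[Theorem 2.3 and Lemma 2.4]{KLMM} (selfadjoint compact operators satisfy $\|T\|=\nu(T)$), and adapts \cite[Theorem 3.2]{KLMM} to show that $n'_K(X)>0$ suffices to upgrade the weak BPBp-nu for compact operators to the full one; there the approximant is corrected modulo a skew-hermitian compact operator rather than by rescaling. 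This quotient-by-$\mathcal{Z}_K(H)$ mechanism is the missing idea, and without it your second subcase does not go through.
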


Our main objective in this paper is to prove the following result, which is not covered by Examples~\ref{EjemplosIniciales}.

\begin{theorem}\label{TheoremMain}
If $L$ is a locally compact Hausdorff space, then $C_0(L)$ has the BPBp-nu for compact operators.
\end{theorem}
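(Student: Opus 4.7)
The plan is to combine the BPBp-nu for compact operators of $c_0$ (Examples~\ref{EjemplosIniciales}(b)) with a transference/approximation scheme that realises $c_0$ as a well-complemented subspace of $C_0(L)$ tailored to the given compact operator $T$ and the given attainment pair $(f_0,\mu_0)\in\Pi(C_0(L))$.

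The first ingredient is an abstract transference lemma. Suppose $Y$ is a Banach space and $P\colon Y\to Y$ is a norm-one projection onto a subspace $Z=P(Y)$ such that the dual projection $P^*$ also has norm one. Then pairs $(y,y^*)\in\Pi(Z)$ lift to pairs $(y,\tilde{y}^*)\in\Pi(Y)$ via a suitable norm-preserving extension $\tilde{y}^*$ of $y^*$, and any compact operator $S_Z\in\mathcal{K}(Z)$ lifts to $S:=\iota_Z S_Z P\in\mathcal{K}(Y)$ with $\nu(S)=\nu(S_Z)$, attained at $(y,\tilde{y}^*)$ whenever $S_Z$ attains at $(y,y^*)$. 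Hence, once one can approximate the data $(T,f_0,\mu_0)$ on $Y$ by data essentially supported on $Z$, the BPBp-nu for compact operators on $Z$ transfers to $Y$.

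Specialising to $Y=C_0(L)$, the strong approximation property announced in the abstract takes the following form. Given $\delta>0$ and $T\in\mathcal{K}(C_0(L))$, use the compactness of $T(B_{C_0(L)})\subset C_0(L)$ and of $T^*(B_{M(L)})\subset M(L)$ to find a countable family of pairwise disjoint open sets $U_i\subset L$ with marked points $t_i\in U_i$ and disjoint Urysohn functions $\varphi_i\in C_0(L)$ (with $\supp(\varphi_i)\subset U_i$, $0\le\varphi_i\le1$, $\varphi_i(t_i)=1$) such that the norm-one projection $P(f):=\sum_i f(t_i)\varphi_i$ onto $Z:=\overline{\operatorname{span}}\{\varphi_i\}\cong c_0$ satisfies $\|PTP-T\|<\delta$, while the attainment pair $(f_0,\mu_0)$ can be replaced, with loss at most $\delta$, by a pair $(Pf_0,\mu_0')$ with $\mu_0'$ discrete and supported on $\{t_i\}$. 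Feeding the induced pair together with the operator $PTP|_Z$ into the BPBp-nu for compact operators of $c_0$ yields perturbed data on $Z$, which lifts via the first step to the required $S\in\mathcal{K}(C_0(L))$ and $(g,\mu)\in\Pi(C_0(L))$ close to $T$ and $(f_0,\mu_0)$.

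I expect the main obstacle to be the simultaneous construction of the disjoint Urysohn family controlling $T$, $f_0$, and $\mu_0$ at once. The difficulty is that a generic $\mu_0\in M(L)$ may be continuous and therefore not itself approximable by discrete measures on finitely many $t_i$. The way around this is to never approximate $\mu_0$ in isolation, but to work with the bilinear quantity $\mu_0(T\,\cdot\,)=(T^*\mu_0)(\cdot)$: since $T^*$ is compact, $T^*\mu_0$ lies in a norm-compact subset of $M(L)$ and can be well approximated by a measure of the form $\sum_i c_i\delta_{t_i}$, after which the Urysohn family is arranged around $\{t_i\}$ to capture simultaneously the image of $T$ and this discretised version of $T^*\mu_0$. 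Balancing the three approximation errors against the modulus $\eta$ afforded by $c_0$ is the technical crux; the same-domain-and-codomain nature of the numerical radius is what forces all these ingredients to be adjusted in tandem.
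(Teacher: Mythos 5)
Your outline founders on the dual side of the approximation, precisely at the point you yourself flag as the crux. The definition of the BPBp-nu for compact operators requires the final pair $(y,y^*)\in\Pi(C_0(L))$ to satisfy $\|y^*-\mu_0\|<\varepsilon$ \emph{in norm}. With a projection of the form $P(f)=\sum_i f(t_i)\varphi_i$, the adjoint $P^*$ sends every measure to a purely atomic measure supported on $\{t_i\}$, and the lifting of a functional from $Z^*\cong\ell_1$ that your transference step produces is again of this atomic form. If $\mu_0$ has a sizable non-atomic part, any such measure is mutually singular with it, so its distance to $\mu_0$ is bounded below by the mass of that part -- it cannot be made small. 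Your proposed remedy, to approximate only the composite $T^*\mu_0$ (which is indeed norm-compactly approachable by discrete measures), controls the scalar $\mu_0(Tf_0)$ but gives no route back to a functional that is norm-close to $\mu_0$ itself; the requirement $\|y^*-\mu_0\|<\varepsilon$ is simply not addressed. The paper avoids this by building the projection \emph{adapted to the given measures}: one sets $\mu=\sum_j|\mu_j|$, writes $\mu_j=g_j\mu$ by Radon--Nikod\'ym, approximates $g_j$ by simple functions, uses inner regularity to extract compact sets $K_m$, and then takes a projection whose first $M$ coordinates are the normalized integrals $f\mapsto \frac{1}{\mu(K_m)}\int_{K_m}f\,\mathrm{d}\mu$ (not point evaluations), with the remaining coordinates point evaluations at marked points of a carefully constructed partition of unity. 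Then $P^*$ fixes the discretized measures $\sum_m\alpha_{m,j}\chi_{K_m}\mu$, whence $\|P^*\mu_j-\mu_j\|<\varepsilon$, while the disjointness of the $\varphi_m$ and the marked points still make the range isometric to a finite-dimensional $\ell_\infty$ space (Theorem~\ref{theorem:AP_C0(L)}), which then feeds into the abstract transference result (Lemma~\ref{LemaBPBpnuK1} or Proposition~\ref{Prop:Projec}).

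A secondary, fixable point: your abstract transference step asserts $\nu(\iota_Z S_Z P)=\nu(S_Z)$ for an arbitrary norm-one projection, but numerical radius does not behave well under such extensions in general; this is exactly why the paper's Lemma~\ref{LemaBPBpnuK1} imposes condition (v) (absolute projection, or $n_K$ of both spaces equal to one). In your setting it can be salvaged because $n(C_0(L))=n(c_0)=1$, giving $\nu(S)\leq\|S\|\leq\|S_Z\|=\nu(S_Z)$ together with the reverse inequality from the lifted attaining pair, but this needs to be said; also note that the paper's lemma requires the projection to almost fix finitely many points and functionals coming from the relative compactness of $T(B_X)$ and $T^*(B_{X^*})$, not merely $\|PTP-T\|$ small and the single pair $(f_0,\mu_0)$, in order to run the normalization and perturbation estimates.
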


As a consequence, we get that $L_\infty(\mu)$ spaces have the BPBp-nu for compact operators, completing Example~\ref{EjemplosIniciales}.c and Proposition \ref{prop:LpProp}.

\begin{cor}\label{cor:LinftyProp}
$L_\infty(\mu)$ has the BPBp-nu for compact operators for every measure $\mu$.
\end{cor}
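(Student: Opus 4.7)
My plan is to deduce the corollary from Theorem~\ref{TheoremMain} by a standard representation argument. The key input is the classical fact that, for any measure $\mu$, the space $L_\infty(\mu)$ is an abelian $C^*$-algebra with unit (equivalently, an AM-space with unit), and hence by Kakutani's representation theorem (or the Gelfand--Naimark theorem) there exists a compact Hausdorff space $K$ and an isometric linear isomorphism $\Phi\colon L_\infty(\mu) \to C(K)$. Since $K$ is compact we have $C(K)=C_0(K)$, so Theorem~\ref{TheoremMain} applies and gives that $C(K)$ has the BPBp-nu for compact operators.

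The remaining step is to observe that the BPBp-nu for compact operators is preserved under (onto) linear isometries. This is routine: if $\Phi\colon X\to Y$ is a surjective linear isometry, then the map $T\mapsto \Phi T \Phi^{-1}$ is an isometric bijection from $\mathcal{K}(X)$ onto $\mathcal{K}(Y)$, and the dual map $\Phi^*\colon Y^*\to X^*$ induces a bijection $(x,x^*)\mapsto (\Phi(x),(\Phi^{-1})^*(x^*))$ from $\Pi(X)$ onto $\Pi(Y)$. Under these correspondences, both the numerical radius and the quantity $|x^*(T(x))|$ are preserved, and all the norm distances appearing in Definition~\ref{definition_BPBp-nu-K} are preserved as well. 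Hence transferring the approximating data from $C(K)$ back to $L_\infty(\mu)$ via $\Phi^{-1}$ yields exactly what is needed, with the same modulus $\eta(\varepsilon)$ provided by Theorem~\ref{TheoremMain}.

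I do not foresee a serious obstacle: the only substantive ingredient beyond Theorem~\ref{TheoremMain} is the Kakutani/Gelfand identification $L_\infty(\mu)\equiv C(K)$, which is standard and requires no adaptation. The isometric invariance of the BPBp-nu for compact operators is essentially built into the definition, since every object appearing in it (the numerical radius, the set $\Pi(X)$, compactness of operators, and the norms of the relevant differences) is defined purely in terms of the Banach space structure.
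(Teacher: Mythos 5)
Your argument is correct and is exactly the route the paper intends: the corollary is deduced from Theorem~\ref{TheoremMain} via the standard Kakutani/Gelfand identification of $L_\infty(\mu)$ with a $C(K)$ space, together with the (routine) observation that the BPBp-nu for compact operators is invariant under surjective linear isometries. No gaps.
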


Let us recall that it is shown in \cite{AGR} that the real space $C(K)$ has the BPBp-nu for some compact Hausdorff spaces $K$, but the general case, as well as the complex case, remain open as far as we know. However, Theorem \ref{TheoremMain} gives a complete answer in the case of compact operators.

To get the proof of Theorem~\ref{TheoremMain}, we need two kind of ingredients. On the one hand, we provide in Section~\ref{SectionTools} some abstract results that will allow us to carry the BPBp-nu for compact operators from sequence spaces into function spaces, in some cases. The more general result of this kind is Lemma~\ref{LemaBPBpnuK1}, which will be the first ingredient for the proof of Theorem~\ref{TheoremMain}. It is somehow an extension of \cite[Lemma 2.1]{DGMM} but it needs more restrictive hypothesis in order to deal with the numerical radius instead of with the norm of the operators. We also provide in that section some useful particular cases of Lemma~\ref{LemaBPBpnuK1} which allow to show, for instance, that every predual of $\ell_1$ has the BPBp-nu for compact operators, see Corollary~\ref{Cor:PredualEll1}. The second ingredient for the proof of Theorem~\ref{TheoremMain} is some strong approximation property of $C_0(L)$ spaces and their duals which will be provided in Section~\ref{SectionCL} (see Theorem \ref{theorem:AP_C0(L)}) and which will allow us to apply Lemma~\ref{LemaBPBpnuK1} in this case, thus providing the proof of Theorem~\ref{TheoremMain}. Let us also comment that Theorem \ref{theorem:AP_C0(L)} gives a much stronger approximation property of $C_0(L)$ and its dual space than \cite[Lemma 3.4]{DGMM}.

\subsection{\texorpdfstring{$\boldsymbol{L_p}$}{Lp}
spaces have the BPBp-nu for compact operators}\label{Subsec:LpHilbert}\hfill

In this subsection, we will adapt the concepts and results from \cite{KLM14} and \cite{KLMM} to the compact operators case to show that $L_p(\mu)$ spaces have the BPBp-nu for compact operators for $1<p<\infty$, providing thus a proof of Proposition~\ref{prop:LpProp}.

In \cite[Definition 5]{KLM14} a weaker version of the BPBp-nu, the weak BPBp-nu, was introduced and studied. We present here the compact operators version of that property.

\begin{definition}
A Banach space $X$ is said to have the \emph{weak BPBp-nu for compact operators} if for every $0<\varepsilon < 1$, there exists $\eta(\varepsilon)\in (0,1)$ such that whenever $T\in \mathcal{K}(X)$ and $(x,\, x^*)\in \Pi(X)$ satisfy $\nu(T)=1$ and $|x^*(T(x))|>1-\eta(\varepsilon)$, there exist $S\in \mathcal{K}(X)$ and $(y,\, y^*)\in \Pi(X)$ such that
$$\nu(S)=|y^*(S(y))|,\quad \| T-S\|<\varepsilon,\quad \| x-y\|<\varepsilon,\quad \| x^*-y^*\|<\varepsilon.$$
\end{definition}

Note that this is a similar property to the BPBp-nu for compact operators, but without asking for the condition $\nu(S)=1$ (see Definition~\ref{definition_BPBp-nu-K}).

It is shown in \cite[Proposition 4]{KLM14} that uniformly convex and uniformly smooth Banach spaces have the weak BPBp-nu. This result also holds for the compact operators version by an easy adaptation of the proof.

\begin{prop}\label{prop:UCUS-wBPBpnuK}
If a Banach space is uniformly convex and uniformly smooth, then it has the weak BPBp-nu for compact operators.
\end{prop}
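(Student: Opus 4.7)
The plan is to follow the proof of \cite[Proposition 4]{KLM14} essentially verbatim, checking that every perturbation appearing in the argument preserves compactness. Since $X$ is uniformly convex, it has a modulus of uniform convexity $\delta_X$; since $X$ is uniformly smooth, $X^*$ is uniformly convex, with its own modulus $\delta_{X^*}$. Given $\varepsilon \in (0,1)$, one takes $\eta(\varepsilon)$ small in terms of $\delta_X(\varepsilon)$ and $\delta_{X^*}(\varepsilon)$ (precisely as in \cite{KLM14}), so that the quantitative estimates driving the original construction remain valid.

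Starting from $T_0 := T \in \mathcal{K}(X)$ with $\nu(T_0)=1$ and $(x_0,x_0^*)\in\Pi(X)$ with $|x_0^*(T_0x_0)| > 1-\eta(\varepsilon)$, the argument in \cite{KLM14} builds inductively triples $(T_n, x_n, x_n^*)$ with $(x_n, x_n^*) \in \Pi(X)$, where at each step $T_{n+1}$ differs from $T_n$ by a suitably small \emph{rank-one} operator (and $(x_{n+1},x_{n+1}^*)$ by small adjustments of $(x_n, x_n^*)$) chosen so that $|x_{n+1}^*(T_{n+1}x_{n+1})|$ is closer to $\nu(T_{n+1})$ than $|x_n^*(T_nx_n)|$ is to $\nu(T_n)$. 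The uniform convexity of $X$ and $X^*$ forces geometric decay of $\|T_{n+1}-T_n\|$, $\|x_{n+1}-x_n\|$ and $\|x_{n+1}^*-x_n^*\|$, so that one may pass to norm limits $S := \lim T_n$, $y := \lim x_n$, $y^* := \lim x_n^*$ and obtain $(y,y^*)\in\Pi(X)$ with $|y^*(Sy)| = \nu(S)$, together with the control $\|T-S\|<\varepsilon$, $\|x_0-y\|<\varepsilon$, $\|x_0^*-y^*\|<\varepsilon$.

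The only additional observation needed for the compact operators case is the following: each perturbation $T_{n+1} - T_n$ is of rank one, hence compact, and compactness is stable under finite sums and under norm limits. Consequently, every $T_n$ lies in $\mathcal{K}(X)$, and since $\mathcal{K}(X)$ is closed in $\mathcal{L}(X)$, the limit $S$ also lies in $\mathcal{K}(X)$. This yields the desired weak BPBp-nu statement for compact operators.

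The main point to be checked in executing this plan is precisely that the perturbations in \cite{KLM14} are genuinely of finite rank (and not, say, built from some abstract operator associated to $T$ that could destroy compactness); inspection of the construction there confirms this, so no essential obstacle arises, and the adaptation is indeed routine.
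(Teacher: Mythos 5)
Your proposal is correct and follows essentially the same route as the paper: both adapt the iterative construction of \cite[Proposition 4]{KLM14} verbatim, observing that each perturbation keeps the operators $T_n$ compact and that $\mathcal{K}(X)$ is norm-closed, so the limit $S$ is compact. Your additional remark that the perturbations are of rank one just makes explicit the compactness-preservation that the paper's one-line proof invokes.
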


\begin{proof}
We can follow the proof of \cite[Proposition 4]{KLM14}, just keeping in mind that if the original operator $T_0$ is compact, then the rest of operators $T_n$ from that proof are also compact, and so, $S$ is compact too.
\end{proof}

Later, in \cite[Proposition 6]{KLM14}, it is proven that in Banach spaces with positive numerical index, the BPBp-nu and the weak BPBp-nu are equivalent. This property is also true for the compact operators versions of the properties if we use the compact numerical index.

\begin{prop}\label{prop:nXpos-wEquiv}
Let $X$ be a Banach space such that $n_K(X)>0$. Then $X$ has the BPBp-nu if, and only if, it has the weak BPBp-nu.
\end{prop}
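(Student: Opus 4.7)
The forward implication, that the BPBp-nu for compact operators entails its weak version, is immediate from the two definitions: the only difference is the extra normalization $\nu(S)=1$, which the weak property does not require. All the content lies in the converse, and the idea is essentially the one used in \cite[Proposition 6]{KLM14} for the non-compact setting, with the compact numerical index $n_K(X)$ playing the role that $n(X)$ played there.

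More precisely, given $\varepsilon\in(0,1)$ I would choose $\varepsilon'>0$ small (to be fixed later in terms of $\varepsilon$ and $n_K(X)$), and let $\eta_w$ denote a modulus witnessing the weak BPBp-nu for compact operators. Define $\eta(\varepsilon):=\eta_w(\varepsilon')$. Given $T\in\mathcal{K}(X)$ with $\nu(T)=1$ and $(x,x^*)\in\Pi(X)$ with $|x^*(T(x))|>1-\eta(\varepsilon)$, apply the weak property to obtain $S\in\mathcal{K}(X)$ and $(y,y^*)\in\Pi(X)$ with $\nu(S)=|y^*(S(y))|$ and
\[
\|T-S\|<\varepsilon',\qquad \|x-y\|<\varepsilon',\qquad \|x^*-y^*\|<\varepsilon'.
\]
Since $\nu$ is a seminorm dominated by $\|\cdot\|$, one has $|\nu(S)-1|=|\nu(S)-\nu(T)|\leq\|S-T\|<\varepsilon'$, so if $\varepsilon'<1$ then $\nu(S)>0$, and I may define the compact operator $\widetilde S:=S/\nu(S)$. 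By construction $\nu(\widetilde S)=|y^*(\widetilde S(y))|=1$.

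The key quantitative input of the hypothesis $n_K(X)>0$ enters in controlling $\|T-\widetilde S\|$. Since $T\in\mathcal{K}(X)$, the compact numerical index gives $\|T\|\leq\nu(T)/n_K(X)=1/n_K(X)$; hence $\|S\|\leq 1/n_K(X)+\varepsilon'$. The triangle inequality yields
\[
\|T-\widetilde S\|\;\leq\;\|T-S\|+\Bigl|1-\tfrac{1}{\nu(S)}\Bigr|\,\|S\|\;\leq\;\varepsilon'+\frac{\varepsilon'\bigl(1/n_K(X)+\varepsilon'\bigr)}{1-\varepsilon'},
\]
which can be made smaller than $\varepsilon$ by choosing $\varepsilon'$ sufficiently small in terms of $\varepsilon$ and $n_K(X)$. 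The closeness estimates for $y$ and $y^*$ are automatic since $\varepsilon'<\varepsilon$, so $\widetilde S$ and $(y,y^*)$ witness the BPBp-nu for compact operators with modulus $\eta$.

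There is no genuine obstacle here beyond the quantitative bookkeeping: one must keep $\nu(S)$ bounded away from $0$ (ensured by $\varepsilon'<1/2$, say), observe that renormalization preserves compactness, and note that $(y,y^*)$ automatically attains the numerical radius of $\widetilde S$ by homogeneity. The only conceptual point is the use of $n_K(X)$ instead of $n(X)$ to bound $\|T\|$ from $\nu(T)$, which is exactly the hypothesis at hand.
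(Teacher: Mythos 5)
Your proof is correct and is essentially the argument the paper relies on: the paper simply cites the adaptation of \cite[Proposition 6]{KLM14} with compact operators and $n_K(X)$ in place of $n(X)$, and that adaptation is exactly your rescaling of $S$ by $\nu(S)$ together with the bound $\|T\|\leq 1/n_K(X)$. No substantive difference.
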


\begin{proof}
It suffices to follow the proof from \cite[Proposition 6]{KLM14} but with both $T$ and $S$ being now compact operators, and using $n_K(X)$ instead of $n(X)$.
\end{proof}

As a consequence of these two results, similarly to what is done in \cite{KLM14}, we get that all $L_p(\mu)$ spaces have the BPBp-nu for compact operators when $1<p<\infty$ in the complex case and when $1<p<\infty$, $p\neq 2$ in the real case. This is so because, on the one hand, in the real case,
$$
n_K(L_p(\mu))\geq n(L_p(\mu))>0 \qquad \bigl(1<p<\infty,\ p\neq 2\bigr)
$$
by \cite{MarMerPop} and, on the other hand, $n_K(X)\geq 1/\mathrm{e}>0$ for every complex Banach space (see \cite[Eq.~(1) in p.\ 156]{KaMaPa}, for instance).

This provides the proof of Proposition~\ref{prop:LpProp} for all values of $p$ in $(1,+\infty)$ in the complex case and for all values of $p$ in $(1,+\infty)$ except for $p=2$ in the real case.

Our next aim is to show that real Hilbert spaces also have the BPBp-nu for compact operators, by adapting the ideas from \cite{KLMM}.

First, given a real Banach space $X$, we consider the following subset of $\mathcal{K}(X)$:
$$
\mathcal{Z}_K(X):=\bigl\{T\in \mathcal{K}(X)\colon \nu(T)=0\bigr\}
$$
which is the set of all skew-hermitian compact operators on $X$. Observe that
$$
\mathcal{Z}_K(X)=\mathcal{K}(X)\cap \mathcal{Z}(X),
$$
where $\mathcal{Z}(X)$ is the Lie-algebra of all skew-hermitian operators on $X$ (see \cite[p.~1004]{KLMM} for instance). Adapting the concept of second numerical index given in \cite{KLMM}, we define the  \emph{second numerical index for compact operators} of a Banach space $X$ as the constant
\begin{align*}
n'_K(X)&:=\inf \bigl\{\nu(T)\colon T\in \mathcal{K}(X), \| T+\mathcal{Z}_K(X)\|=1\bigr\}\\
&= \max \bigl\{ M\geq 0\colon M \| T+\mathcal{Z}_K(X)\| \leq \nu(T) \text{ for all } T\in \mathcal{K}(X)\bigr\},
\end{align*}
where $\| T+\mathcal{Z}_K(X)\|$ is the quotient norm in $\mathcal{K}(X)/\mathcal{Z}_K(X)$.

The next result is a version for compact operators of \cite[Theorem 3.2]{KLMM}.

\begin{prop}\label{prop:nK'pos-Equiv}
Let $X$ be a real Banach space with $n'_K(X)>0$. Then, the BPBp-nu for compact operators and the weak BPBp-nu for compact operators are equivalent in $X$.
\end{prop}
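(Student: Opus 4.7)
The forward implication (BPBp-nu for compact operators implies weak BPBp-nu for compact operators) is immediate from the definitions, since the former imposes the extra condition $\nu(S)=1$. So the plan is to establish the reverse implication by adapting the idea of \cite[Theorem 3.2]{KLMM} to the compact setting, using $n'_K(X)>0$ to control the operator norm of the data modulo skew-hermitian compact operators.

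Given $\varepsilon\in(0,1)$, I would choose auxiliary parameters $\varepsilon',\delta>0$ (to be fixed at the end) and let $\eta:=\eta(\varepsilon')>0$ be what the weak BPBp-nu for compact operators provides for $\varepsilon'$. Given $T\in\mathcal{K}(X)$ with $\nu(T)=1$ and $(x,x^*)\in\Pi(X)$ with $|x^*(T(x))|>1-\eta$, the first step is to ``pre-process'' $T$: since $n'_K(X)>0$ and $\nu(T)=1$ give $\|T+\mathcal{Z}_K(X)\|\leq 1/n'_K(X)$, I pick $Z\in \mathcal{Z}_K(X)$ with $\|T-Z\|<1/n'_K(X)+\delta$ and set $T':=T-Z\in\mathcal{K}(X)$. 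Since $Z$ is skew-hermitian, $\nu(Z)=0$ and $x^*(Z(x))=0$, so the seminorm property of $\nu$ yields $\nu(T')=\nu(T)=1$ and $|x^*(T'(x))|=|x^*(T(x))|>1-\eta$. Thus $T'$ and $(x,x^*)$ still satisfy the hypothesis of the weak BPBp-nu for compact operators, but now the operator norm $\|T'\|$ is controlled by $1/n'_K(X)+\delta$.

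Applying the weak BPBp-nu for compact operators to $(T',(x,x^*))$ produces $S_0\in\mathcal{K}(X)$ and $(y,y^*)\in\Pi(X)$ with $\nu(S_0)=|y^*(S_0(y))|$, $\|T'-S_0\|<\varepsilon'$, $\|x-y\|<\varepsilon'$, and $\|x^*-y^*\|<\varepsilon'$. Since $|\nu(T')-\nu(S_0)|\leq \|T'-S_0\|$, I get $\nu(S_0)\in(1-\varepsilon',1+\varepsilon')$ and $\|S_0\|\leq 1/n'_K(X)+\delta+\varepsilon'$. I would then compensate for the pre-processing by defining
$$
S:=\tfrac{1}{\nu(S_0)}\,S_0+Z\in\mathcal{K}(X).
$$
Using $\nu(Z)=0$ and $y^*(Z(y))=0$, a short calculation shows $\nu(S)=1=|y^*(S(y))|$, so $S$ attains its numerical radius at $(y,y^*)$. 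The key point is that the $Z$ term cancels in the distance to $T$:
$$
\|T-S\|=\bigl\|T'-\tfrac{1}{\nu(S_0)}S_0\bigr\|\leq \|T'-S_0\|+\bigl|1-\tfrac{1}{\nu(S_0)}\bigr|\,\|S_0\|\leq \varepsilon'+\frac{\varepsilon'}{1-\varepsilon'}\Bigl(\frac{1}{n'_K(X)}+\delta+\varepsilon'\Bigr).
$$

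The only real obstacle will be the parameter bookkeeping: I need $\varepsilon'$ and $\delta$ small enough (in terms of $\varepsilon$ and the fixed constant $n'_K(X)$) for the displayed bound to be below $\varepsilon$, together with $\varepsilon'\leq\varepsilon$ so that the approximations $\|x-y\|$ and $\|x^*-y^*\|$ are also below $\varepsilon$. Since $n'_K(X)>0$ is a fixed positive constant, such a choice is possible, and the resulting $\eta(\varepsilon)$ witnesses the BPBp-nu for compact operators. The conceptual heart of the argument is the fact that elements of $\mathcal{Z}_K(X)$ leave both $\nu$ and the value $y^*(\,\cdot\,(y))$ on $\Pi(X)$ invariant, so subtracting a nearly-norm-optimal skew-hermitian element from $T$ before applying the weak property, and adding the same element back to $S_0/\nu(S_0)$ at the end, costs nothing in numerical-radius data while supplying the norm control needed to run the standard rescaling.
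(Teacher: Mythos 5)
Your proposal is correct and is essentially the paper's argument: the paper simply says to adapt the proof of \cite[Theorem 3.2]{KLMM} by replacing $\mathcal{Z}(X)$ with $\mathcal{Z}_K(X)$ and $n'(X)$ with $n'_K(X)$, and your write-up carries out exactly that adaptation (exploiting that elements of $\mathcal{Z}_K(X)$ leave all numerical-radius data on $\Pi(X)$ unchanged while $n'_K(X)>0$ gives the operator-norm control needed for the rescaling). The only cosmetic difference is that you subtract the near-optimal skew-hermitian compact $Z$ from $T$ before invoking the weak property and add it back afterwards, rather than correcting the output operator, which changes nothing essential.
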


\begin{proof}
It suffices to adapt the steps from the proof of \cite[Theorem 3.2]{KLMM} to the case of compact operators. That is: all the involved operators $T$, $S$, $S_1$ and $S_2$ are now compact, the $\mathcal{Z}(X)$ set is replaced by $\mathcal{Z}_K(X)$, and the index $n'(X)$ is replaced by $n'_K(X)$.
\end{proof}

We are going to see next that the second numerical index for compact operators of a real Hilbert space equals one.

\begin{prop}\label{prop:n'KHpos}
Let $H$ be a real Hilbert space. Then, $n'_K(H)=1$.
\end{prop}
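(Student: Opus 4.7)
The plan is to mimic the classical computation showing $n'(H)=1$ from \cite{KLMM}, but to check that everything can be done while staying inside $\mathcal{K}(H)$. The key structural fact is that on a real Hilbert space, skew-hermitian means skew-symmetric, so $\mathcal{Z}_K(H)=\{T\in\mathcal{K}(H)\colon T^{*}=-T\}$, and every $T\in\mathcal{K}(H)$ admits the decomposition $T=T_s+T_a$ with $T_s=\tfrac12(T+T^{*})$ self-adjoint and $T_a=\tfrac12(T-T^{*})$ skew-symmetric. Since the adjoint of a compact operator is compact, both $T_s$ and $T_a$ are compact; in particular $T_a\in\mathcal{Z}_K(H)$.

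First I would establish the identity $\nu(T)=\|T_s\|$ for every $T\in\mathcal{K}(H)$. The inequality $\nu(T_a)=0$ is immediate, and for the self-adjoint piece the spectral theorem for compact self-adjoint operators gives $\|T_s\|=\sup_{\|x\|=1}|\langle T_s x,x\rangle|=\nu(T_s)$; since $\nu$ is a seminorm this yields $\nu(T)\le\nu(T_s)+\nu(T_a)=\|T_s\|$ and the reverse inequality $\nu(T)\ge\nu(T_s)-\nu(T_a)=\|T_s\|$. Note $\Pi(H)$ for a real Hilbert space reduces to pairs $(x,x)$ with $\|x\|=1$ (identifying $H$ with $H^{*}$), which is why $\nu(T)=\sup_{\|x\|=1}|\langle Tx,x\rangle|$.

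Next I would compute the quotient norm $\|T+\mathcal{Z}_K(H)\|$. On the one hand, subtracting the compact skew-symmetric operator $T_a$ gives the upper bound $\|T+\mathcal{Z}_K(H)\|\le\|T-T_a\|=\|T_s\|$. On the other hand, for any $S\in\mathcal{Z}_K(H)$ the self-adjoint part of $T+S$ equals $T_s$, and the symmetrization map $A\mapsto\tfrac12(A+A^{*})$ has norm $1$ on $\mathcal{L}(H)$, so $\|T_s\|=\|(T+S)_s\|\le\|T+S\|$. Taking the infimum gives $\|T_s\|\le\|T+\mathcal{Z}_K(H)\|$, so equality holds.

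Combining the two paragraphs, $\nu(T)=\|T_s\|=\|T+\mathcal{Z}_K(H)\|$ for every $T\in\mathcal{K}(H)$, which forces $n'_K(H)=1$. There is no serious obstacle here: the only point to watch is that all objects in the decomposition $T=T_s+T_a$ remain compact so that we are genuinely working inside $\mathcal{K}(H)$ and $\mathcal{Z}_K(H)$, and that the norm-equals-numerical-radius property for self-adjoint operators is applied to $T_s$, which is compact and self-adjoint.
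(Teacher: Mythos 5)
Your proof is correct and follows essentially the same route as the paper, which simply invokes the adaptation of \cite[Theorem 2.3]{KLMM} to compact operators: the symmetric/antisymmetric decomposition $T=T_s+T_a$ (with both parts compact), the identity $\nu(T)=\|T_s\|$ via Lemma~\ref{lema:HSprev}, and the computation of the quotient norm $\|T+\mathcal{Z}_K(H)\|=\|T_s\|$ are exactly the steps of that argument. Writing them out explicitly, as you do, is precisely the ``adaptation'' the paper leaves to the reader.
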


The proof of this result will be an adaptation of the one of \cite[Theorem 2.3]{KLMM}. Recall that in a real Hilbert space endowed with an inner product $(\cdot | \cdot)$, $H^*$ identifies with $H$ by the isometric isomorphism $x\longmapsto (\cdot | x)$. Therefore, $\Pi(X)=\{ (x, x)\in H\times H:\, x\in S_H\}$, and so, for every $T\in \mathcal{L}(H)$, one has $\nu(T)=\sup \{|(Tx|x)|\colon x\in S_H\}$. We first need to give the compact operators version of \cite[Lemma 2.4]{KLMM} whose proof is an obvious adaptation of the proof of that result.

\begin{lemma}\label{lema:HSprev}
Let $H$ be a real Hilbert space.
\begin{enumerate}[(a)]
\item $\mathcal{Z}_K(H)=\{T\in \mathcal{K}(H)\colon T=-T^*\}$.
\item If $T\in \mathcal{K}(H)$ is selfadjoint (i.e.\ $T=T^*$), then $\|T\| = \nu(T)$.
\end{enumerate}
\end{lemma}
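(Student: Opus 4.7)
For part (a), the strategy is to exploit the identification $H^* \cong H$ and the resulting description $\nu(T)=\sup_{x\in S_H} |(Tx|x)|$, so that membership in $\mathcal{Z}_K(H)$ reduces to the vanishing of the quadratic form $x\mapsto (Tx|x)$. One direction is immediate: if $T\in\mathcal{K}(H)$ satisfies $T=-T^*$, then $(Tx|x)=(x|T^*x)=-(x|Tx)=-(Tx|x)$, so $(Tx|x)=0$ on all of $H$ and $\nu(T)=0$. For the converse, I would polarize: if $(Tx|x)=0$ for every $x\in H$, then expanding $(T(x+y)|x+y)=0$ gives $(Tx|y)+(Ty|x)=0$ for all $x,y\in H$, i.e.\ $(Tx|y)=-(T^*x|y)$ (using symmetry of the real inner product), and varying $y$ yields $T+T^*=0$. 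Intersecting with $\mathcal{K}(H)$ finishes (a); equivalently, this says $\mathcal{Z}_K(H)=\mathcal{Z}(H)\cap\mathcal{K}(H)$, which is exactly the observation made in the paragraph before Proposition~\ref{prop:n'KHpos}.

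For part (b), the natural path is via the spectral theorem for compact selfadjoint operators on a real Hilbert space: such a $T$ admits an eigenvalue $\lambda\in\mathbb{R}$ with $|\lambda|=\|T\|$ and a unit eigenvector $x_0\in S_H$ with $Tx_0=\lambda x_0$. Then $|(Tx_0|x_0)|=|\lambda|=\|T\|$, so $\nu(T)\geq\|T\|$, while the reverse inequality $\nu(T)\leq\|T\|$ is automatic. Alternatively, without invoking spectral theory, one can give the classical selfadjoint argument: set $\alpha:=\sup_{x\in S_H}|(Tx|x)|$, use the polarization identity
\[
4(Tx|y)=\bigl(T(x+y)\,\big|\,x+y\bigr)-\bigl(T(x-y)\,\big|\,x-y\bigr),
\]
valid for $T=T^*$, together with the parallelogram law to bound $|(Tx|y)|\leq \alpha\,\tfrac12(\|x+y\|^2+\|x-y\|^2)/2=\alpha(\|x\|^2+\|y\|^2)/2$, and conclude $\|T\|\leq\alpha=\nu(T)$.

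There is no real obstacle here; both statements are standard facts about operators on a real Hilbert space, and compactness plays no role in the arguments (it is only inherited in (a) because $T$ is assumed compact and $T^*$ is then compact as well, so $T=-T^*$ stays inside $\mathcal{K}(H)$). Accordingly, I would present the proof very briefly, noting that the compact case is the restriction of the classical bounded case, exactly as the authors suggest with "an obvious adaptation of the proof of \cite[Lemma~2.4]{KLMM}."
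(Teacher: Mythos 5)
Your proof is correct and matches the paper's intent: the paper gives no argument beyond noting that the bounded-operator case of \cite[Lemma 2.4]{KLMM} restricts verbatim to compact operators, and your polarization argument for (a) and the classical selfadjoint estimate (or spectral theorem) for (b) are exactly the standard facts being invoked. Your closing remark that compactness plays no role beyond keeping $T^*$ (hence $-T^*$) in $\mathcal{K}(H)$ is precisely why the adaptation is ``obvious.''
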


We are now ready to present the pending proof of Proposition~\ref{prop:n'KHpos}.

\begin{proof}[Proof of Proposition \ref{prop:n'KHpos}]
It suffices to adapt the proof of \cite[Theorem 2.3]{KLMM} to the compact operators case, that is: the involved operators $T$ and $S$ are now compact, and the set $\mathcal{Z}(X)$ is replaced by $\mathcal{Z}_K(X)$.
\end{proof}

As a consequence of Propositions \ref{prop:UCUS-wBPBpnuK}, \ref{prop:nK'pos-Equiv}, and \ref{prop:n'KHpos}, we get the following result which provides the proof of the pending part of Proposition~\ref{prop:LpProp}.

\begin{cor}\label{cor:HilbertProp}
If $H$ is a real Hilbert space, then it has the BPBp-nu for compact operators.
\end{cor}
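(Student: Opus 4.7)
The plan is essentially to chain together the three propositions that were developed precisely for this purpose, so I would present the proof as a short combinatorial argument rather than a new piece of analysis.

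First I would recall that every real Hilbert space $H$ is uniformly convex and uniformly smooth (the parallelogram law gives both moduli explicitly). Hence by Proposition~\ref{prop:UCUS-wBPBpnuK}, $H$ has the weak BPBp-nu for compact operators. Next, Proposition~\ref{prop:n'KHpos} tells us that $n'_K(H)=1$, in particular $n'_K(H)>0$. This puts $H$ in the setting of Proposition~\ref{prop:nK'pos-Equiv}, which says that under the hypothesis $n'_K(X)>0$ the weak BPBp-nu for compact operators and the BPBp-nu for compact operators coincide. Applying this equivalence, the weak version established in the first step upgrades to the full BPBp-nu for compact operators.

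There is essentially no obstacle here because all the real work has been done in the three preceding propositions: the uniform convexity/uniform smoothness provides the approximation mechanism needed for a weak version of the property, the computation $n'_K(H)=1$ controls the skew-hermitian part of a compact operator, and Proposition~\ref{prop:nK'pos-Equiv} allows us to perturb a compact operator inside its class modulo $\mathcal{Z}_K(H)$ so that the numerical radius becomes exactly the modulus of the numerical value at the new pair. The only thing to double-check when writing the proof is that the perturbations provided by Propositions~\ref{prop:UCUS-wBPBpnuK} and \ref{prop:nK'pos-Equiv} both preserve compactness, but this is immediate since the constructions only add compact operators (in the first case the iteratively built $T_n$'s are compact, and in the second case one perturbs by an element of $\mathcal{Z}_K(H)\subset\mathcal{K}(H)$). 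Putting the three propositions together therefore yields the corollary in one line, so my write-up would consist of the single implication chain:
\[
\text{$H$ is UC and US} \ \Rightarrow\ \text{weak BPBp-nu for $\mathcal{K}(H)$} \ \xRightarrow{n'_K(H)=1>0}\ \text{BPBp-nu for $\mathcal{K}(H)$.}
\]
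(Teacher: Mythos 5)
Your proof is correct and follows exactly the paper's own route: Proposition~\ref{prop:UCUS-wBPBpnuK} gives the weak BPBp-nu for compact operators since $H$ is uniformly convex and uniformly smooth, and Propositions~\ref{prop:n'KHpos} and \ref{prop:nK'pos-Equiv} (via $n'_K(H)=1>0$) upgrade it to the full property. Nothing further is needed.
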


\section[First ingredient]{First ingredient: the tools}\label{SectionTools}

In this section, we will provide an abstract result that will allow us later to carry the BPBp-nu for compact operators from some sequence spaces to function spaces. The most general version that we are able to prove is the following, which is inspired in \cite[Lemma~2.1]{DGMM}, but it needs more requirements. We need some notation first. An \emph{absolute norm} $|\cdot |_a$ is a norm  in $\mathbb{R}^2$ such that $|(1, 0)|_a=|(0,1)|_a=1$ and $|(s, t)|_a=|(|s|, |t|)|_a$ for every $(s,t)\in\mathbb{R}^2$.
Given a Banach space $X$, we say that a projection $P$ on $X$ is an \emph{absolute projection} if there is an  absolute norm $|\cdot|_a$ such that $\|x\|=\bigl|(\|P(x)\|,\|x-P(x)\|)\bigr|_a$ for every $x\in X$. Examples of absolute projections are the $M$- and $L$-projections and, more in general, the $\ell_p$-projections. We refer the reader to \cite{DGMM} for the use of absolute norms with the Bishop-Phelps-Bollob\'{a}s type properties and to the references therein for more information on absolute norms.

\begin{lemma}\label{LemaBPBpnuK1}
Let $X$ be a Banach space satisfying that $n_K(X)>0$. Suppose that there is a mapping $\eta\colon (0,1)\longrightarrow (0,1)$ such that given $\delta>0$, $x_1^*,\, \ldots,\, x_n^*\in B_{X^*}$ and $x_1,\, \ldots,\, x_\ell \in B_X$, we can find norm one operators $\widetilde{P}\colon X\longrightarrow \widetilde{P}(X)$, $i\colon \widetilde{P}(X)\longrightarrow X$ such that for $P:=i\circ \widetilde{P}\colon  X\longrightarrow X$, the following conditions are satisfied:
\begin{enumerate}
\item[(i)] $\| P^* (x_j^*) - x_j^*\| < \delta$, for $j=1,\, \ldots,\, n$.
\item[(ii)] $\| P (x_j) - x_j\| < \delta$, for $j=1,\, \ldots,\, \ell$.
\item[(iii)] $\widetilde{P}\circ i = \mathrm{Id}_{\widetilde{P}(X)}$.
\item[(iv)] $\widetilde{P}(X)$ satisfies the Bishop-Phelps-Bollob\'as property for numerical radius for compact operators with the mapping $\eta$.
\item[(v)] Either $P$ is an absolute projection and $i$ is the natural inclusion, or $n_K(\widetilde{P}(X)) = n_K(X)=1$.
\end{enumerate}
Then, $X$ satisfies the BPBp-nu for compact operators.
\end{lemma}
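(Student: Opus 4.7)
The plan is to mimic the strategy of \cite[Lemma 2.1]{DGMM}: transfer the problem from $X$ to the subspace $\widetilde{P}(X)$ via a projection $P$ that well-approximates the relevant finite data, solve it there by hypothesis (iv), and lift back. Given $\varepsilon\in(0,1)$, I would define $\eta'(\varepsilon)>0$ small enough (as a function of $\eta$, $\varepsilon$, and $n_K(X)$) so as to absorb all the accumulated errors below.

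Given $T\in\mathcal{K}(X)$ with $\nu(T)=1$ and $(x,x^*)\in\Pi(X)$ such that $|x^*(T(x))|>1-\eta'(\varepsilon)$, the first step is to use compactness of $T$ (and of $T^*$) to pick finite $\delta$-nets of $T(B_X)$ and $T^*(B_{X^*})$, rescaled to lie in $B_X$ and $B_{X^*}$. Applying the hypothesis to these finite sets together with $\{x\}$ and $\{x^*\}$, for a sufficiently small $\delta$, yields $\widetilde{P}$, $i$, and $P:=i\widetilde{P}$ satisfying (i)--(v). A standard computation gives $\|PT-T\|$, $\|TP-T\|$, and hence $\|PTP-T\|$, all small; moreover $i$ is automatically an isometric embedding by (iii) together with $\|\widetilde{P}\|=\|i\|=1$. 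Now consider the compact operator $\widetilde{T}:=\widetilde{P}Ti\in\mathcal{K}(\widetilde{P}(X))$; note $i\widetilde{T}\widetilde{P}=PTP\approx T$. Setting $\widetilde{x}:=\widetilde{P}(x)$ and $\widetilde{x}^*:=x^*\circ i$, conditions (i), (ii) ensure that $\|\widetilde{x}\|$, $\|\widetilde{x}^*\|$, and $\widetilde{x}^*(\widetilde{x})=x^*(P(x))$ are all within $\delta$ of $1$, so a routine perturbation produces a genuine pair $(\widetilde{x}_0,\widetilde{x}_0^*)\in\Pi(\widetilde{P}(X))$ at which $|\widetilde{x}_0^*(\widetilde{T}(\widetilde{x}_0))|$ is close to $|x^*(T(x))|$. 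In particular $\nu(\widetilde{T})$ is close to $1$ (and $\le 1$, since every pair in $\Pi(\widetilde{P}(X))$ extends via $\psi\mapsto i(\psi)$ and $\phi\mapsto\phi\circ\widetilde{P}$ to one in $\Pi(X)$).

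Normalizing to $\widetilde{T}_1:=\widetilde{T}/\nu(\widetilde{T})$, apply the BPBp-nu for compact operators on $\widetilde{P}(X)$ (hypothesis (iv)) with the mapping $\eta$ to obtain $\widetilde{S}_0\in\mathcal{K}(\widetilde{P}(X))$ with $\nu(\widetilde{S}_0)=|\widetilde{y}^*(\widetilde{S}_0(\widetilde{y}))|=1$ for some $(\widetilde{y},\widetilde{y}^*)\in\Pi(\widetilde{P}(X))$, close to $\widetilde{T}_1$, $\widetilde{x}_0$, and $\widetilde{x}_0^*$. Lift by setting $S:=i\widetilde{S}_0\widetilde{P}\in\mathcal{K}(X)$, $y:=i(\widetilde{y})\in S_X$, and $y^*:=\widetilde{y}^*\circ\widetilde{P}\in X^*$. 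Using (iii), $y^*(y)=\widetilde{y}^*(\widetilde{y})=1$, which together with $\|y\|=\|y^*\|\le 1$ forces $(y,y^*)\in\Pi(X)$, and $|y^*(S(y))|=|\widetilde{y}^*(\widetilde{S}_0(\widetilde{y}))|=1$. The closeness estimates $\|S-T\|<\varepsilon$, $\|y-x\|<\varepsilon$, $\|y^*-x^*\|<\varepsilon$ then follow by the triangle inequality and the approximations above, once $\eta'(\varepsilon)$ and $\delta$ are chosen small enough; here the assumption $n_K(X)>0$ is used to bound $\|\widetilde{T}\|$ by a multiple of $\nu(\widetilde{T})$ when estimating $\|\widetilde{T}_1-\widetilde{T}\|$.

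The main obstacle is guaranteeing $\nu_X(S)=1$ \emph{exactly}, rather than only close to $1$; this is precisely where hypothesis (v) enters. In the absolute projection case, the splittings $X=P(X)\oplus_a\ker(P)$ and $X^*=P^*(X^*)\oplus_{a'}\ker(P^*)$ force the identity $\nu_X(i\widetilde{S}_0\widetilde{P})=\nu_{\widetilde{P}(X)}(\widetilde{S}_0)=1$, because computing the supremum over $\Pi(X)$ reduces, via the dual absolute norm structure, to the supremum over $\Pi(\widetilde{P}(X))$. If instead $n_K(\widetilde{P}(X))=n_K(X)=1$, then numerical radius coincides with operator norm on compact operators in both spaces, so the identity reduces to $\|i\widetilde{S}_0\widetilde{P}\|_X=\|\widetilde{S}_0\|_{\widetilde{P}(X)}$, which follows from $\|i\|=\|\widetilde{P}\|=1$ together with $\widetilde{P}i=\mathrm{Id}$. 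Verifying these two exact isometric identities for the numerical radius under the transfer is where all the technical work is concentrated.
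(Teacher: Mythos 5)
Your proposal is correct and follows essentially the same route as the paper's proof: finite nets from compactness of $T$ and $T^*$, the approximating projection with $\delta$ tied to $\varepsilon$, $\eta$, and $n_K(X)$, a Bishop--Phelps--Bollob\'as perturbation to land on a genuine pair in $\Pi(\widetilde{P}(X))$, normalization of $\widetilde{P}\circ T\circ i$ (with $\nu\le 1$ via extending pairs by $i$ and $\widetilde{P}^*$), an application of hypothesis (iv), and lifting back, with the exact equality $\nu(S)=1$ handled by the same two cases of (v). The one step you leave as an assertion, namely that for an absolute projection $\nu_X(i\circ\widetilde{S}_0\circ\widetilde{P})=\nu_{\widetilde{P}(X)}(\widetilde{S}_0)$, is exactly what the paper quotes from \cite[Lemma 3.3]{CMM}, so your outline matches the published argument.
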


Let us comment on the differences between the lemma above and \cite[Lemma 2.1]{DGMM}. First, condition (ii) is more restrictive here than in that lemma, where it only dealt with one point. Second, the requirements of item (v) on the compact numerical index or on the absoluteness of the projections did not appear in \cite[Lemma 2.1]{DGMM}, but they are needed here as numerical radius does not behave well in general with respect to extensions of operators.

\begin{proof}
Given $\varepsilon \in (0,\, 1)$, let $\varepsilon_0(\varepsilon)$ be the unique number with $0< \varepsilon_0(\varepsilon) < 1$ such that
$$
\varepsilon_0(\varepsilon)\left( \frac{2}{3}+\frac{1}{(1-\varepsilon_0(\varepsilon))\,n_K(X)} \right)= \varepsilon,
$$
which, in particular, satisfies that $\varepsilon_0(\varepsilon)< \varepsilon$. From now on, we shall simply write $\varepsilon_0$ instead of $\varepsilon_0(\varepsilon)$. We define next
\begin{equation}\label{etaprima}
\eta'(\varepsilon):= \min \left\{ \frac{\varepsilon_0^2 (n_K(X))^2}{72},\, \frac{\left( \eta \left( \frac{\varepsilon_0}{3}\right) \right)^2 (n_K(X))^2}{72}\right \} \qquad \bigl(\varepsilon\in(0,1)\bigr),
\end{equation}
where $\eta$ is the function appearing in the hypotheses of the lemma. We fix $T\in \mathcal{K}(X)$ with $\nu(T)=1$ (thus, $\|T\|\leq \frac{1}{n_K(X)}$) and $(x_1,\, x_1^*)\in \Pi(X)$ such that
$$|x_1^* (T(x_1))|>1-\eta'(\varepsilon).$$
Since $T^*(B_{X^*})$ is relatively compact, we can find $x_2^*,\, \ldots,\, x_n^*\in B_{X^*}$ such that
$$\min_{2\leq j\leq n} \| T^* (x^*) - x_j^* \| < \eta'(\varepsilon) \quad \text{ for all } x^*\in B_{X^*}.$$
Similarly, since $T(B_X)$ is relatively compact, we can find $x_2,\, \ldots,\, x_\ell\in B_X$ such that
$$\min_{2\leq j\leq \ell} \| T(x) - x_j\| < \eta'(\varepsilon) \quad \text{ for all } x\in B_X.$$
Let $\widetilde{P}\colon X\longrightarrow \widetilde{P}(X)$, $i\colon \widetilde{P}(X)\longrightarrow X$ and $P:=i\circ \widetilde{P}\colon X\longrightarrow X$ satisfying the conditions (i)-(v) for $x_1,\ldots,x_\ell\in B_X$, $x_1^*,\ldots,x_n^*\in B_{X^*}$ and $\delta=\eta'(\varepsilon)$.

Now, for every $x^*\in B_{X^*}$, we have
\begin{multline*}
\| T^* (x^*) - P^* (T^*(x^*)) \| \\ \leq \min_{2\leq j\leq n} \left \{ \| T^* (x^*) - x_j^*\| + \| x_j^* - P^* (x_j^*)\| + \|P^*(x_j^*)-  P^* (T^* (x^*))\| \right\}
< 3\eta'(\varepsilon),
\end{multline*}
and hence,
$
\|T-TP\| = \|T^* - P^* T^*\| \leq 3\eta'(\varepsilon)
$. On the other hand, for each $x\in B_X$, we have
\begin{align*}
\| T(x) - P (T (x))\|  &\leq \min_{2\leq j\leq \ell} \left\{ \| T(x) - x_j\| + \|x_j - P (x_j)\| + \| P (x_j) - P (T (x))\| \right\} \\ &< 3\eta'(\varepsilon),
\end{align*}
and then,
$\| T - P T\| \leq 3 \eta'(\varepsilon)$. Therefore,
$$
\| P T P - T\| \leq \| P TP - P T\| + \| P T - T\| \leq \| T P - T\| + \| P T - T\| \leq 6\eta'(\varepsilon).
$$

Consider $(\widetilde{P} (x_1),\, i^* (x_1^*))\in \widetilde{P}(X) \times (\widetilde{P}(X))^*$. Note that  it is not true in general that $(\widetilde{P} (x_1),\, i^* (x_1^*))\in \Pi(\widetilde{P}(X))$, but we have that $\| \widetilde{P} (x_1) \| \leq 1$, $\| i^* (x_1^*) \| \leq 1$, and also, that
$$
x_1^*(i(\widetilde{P}(x_1))) = \underbrace{x_1^*(x_1)}_{=1} - \underbrace{x_1^*(i(\widetilde{P}(x_1)) - x_1)}_{\| P x_1 - x_1\| < \eta'(\varepsilon)} \quad \Longrightarrow \quad \operatorname{Re}(x_1^*(i(\widetilde{P}(x_1)))) \geq 1-\eta'(\varepsilon).
$$
By the Bishop-Phelps-Bollob\'{a}s Theorem (see \cite[Corollary 2.4.b]{CKMMPRB} for this version), there exist $(y,\, y^*)\in \Pi(\widetilde{P}(X))$ satisfying that
$$\max \left\{ \| y - \widetilde{P} (x_1)\|,\, \| y^* - i^* (x_1^*)\| \right\} \leq \sqrt{2\eta'(\varepsilon)} \leq \frac{\varepsilon_0}{3}.
$$
Next, we observe that the following two inequalities hold:
\begin{equation}\label{cota1}
\begin{split}
\| \widetilde{P}^*(y^*) - x_1^*\|  & \leq \|\widetilde{P}^*(y^*) - \widetilde{P}^*(i^*(x_1^*)) \| + \| \widetilde{P}^*(i^*(x_1^*)) - x_1^*\| \\ & \leq \sqrt{2 \eta'(\varepsilon)} + \eta'(\varepsilon)\leq \frac{2}{3} \varepsilon_0.
\end{split}
\end{equation}
\begin{equation}\label{cota2}
\| i(y) - x_1\| \leq \| i(y) - i(\widetilde{P}(x_1))\| + \| i(\widetilde{P}(x_1)) - x_1\| \leq \sqrt{2 \eta'(\varepsilon)} + \eta'(\varepsilon)\leq \frac{2}{3} \varepsilon_0.
\end{equation}

Let $T_1:=\widetilde{P}\circ T\circ i\colon \widetilde{P}(X) \longrightarrow \widetilde{P}(X)$.

\noindent\emph{Claim.} We have that
$$
|y^*(T_1 y)|> 1 - \eta\left( \frac{\varepsilon_0}{3} \right) \quad \text{and} \quad |y^*(T_1 y)|> 1 - \varepsilon_0.
$$
Indeed, from equations \eqref{cota1} and \eqref{cota2}, we obtain that
\begin{align*}
\bigl|x^*(T(x_1)) -& \widetilde{P}^*(y^*(T(i(y))))\bigr|  \\
&\leq |x_1^*(T(x_1)) - x_1^*(T(i(y)))| + |x_1^* (T(i(y))) - \widetilde{P}^*(y^*(T(i(y))))|  \\
&\leq \| T \| \| x_1 - i(y)\| + \| T\| \| x_1^* - \widetilde{P}^*(y^*)\| \\
&\leq 2\| T\| \left( \sqrt{2\eta'(\varepsilon)} + \eta'(\varepsilon)\right).
\end{align*}
Now, we can estimate $|y^*(T_1(y))|$ as follows:
\begin{align*}
|y^* (T_1(y)) | &= \bigl|\widetilde{P}^*(y^*(T(i(y))))\bigr|  \\  &\geq \bigl|x_1^*(T(x_1))\bigr| - \bigl|x_1^*(T(x_1)) - \widetilde{P}^*(y^*(T(i(y))))\bigr| \\ &\geq 1-\eta'(\varepsilon) - 2\| T \| \sqrt{2 \eta'(\varepsilon)} - 2\| T\| \eta'(\varepsilon).
\end{align*}
From here, using the definition of $\eta'(\varepsilon)$ given in Eq.~\eqref{etaprima} and the fact that $\|T\|\leq 1/n_K(X)$, we get both assertions of the claim.

In particular, we get that $\nu(T_1)\geq 1-\varepsilon_0>0$. On the other hand, we also have that $\nu(T_1)\leq 1$. Indeed, if there were some $(q,\, q^*)\in \Pi(\widetilde{P}(X))$ with $|q^*(T_1(q))|>1$, we would get
$$|q^*(T_1(q))| = |q^*(\widetilde{P}(T(i(q))))|=|(\widetilde{P}^*(q^*))(T(i(q)))| > 1,$$
but $\nu(T)=1$, and
$$(\widetilde{P}^*(q^*)) (i(q)) = q^*(\widetilde{P}(i(q))) = q^*(q)=1.$$
Thus $(i(q),\, \widetilde{P}^*(q^*))\in \Pi(X)$, and that is a contradiction.

We define now the operator $\widetilde{T}:= \frac{T_1}{\nu(T_1)}$. Clearly, $\widetilde{T}$ is a compact operator such that $\nu(\widetilde{T})=1$. From the claim, we get that
$$
\bigl|y^*(\widetilde{T}(y))\bigr| =\frac{1}{\nu(T_1)} |y^*(T_1(y))| \geq |y^*(T_1(y))| > 1- \eta \left( \frac{\varepsilon_0}{3} \right).
$$
Now, since $\widetilde{P}(X)$ has the BPBp-nu for compact operators with the mapping $\eta$, there exist a compact operator $\widetilde{S}\colon  \widetilde{P}(X) \longrightarrow \widetilde{P}(X)$  with $\nu(\widetilde{S}) = 1$ and $(z,\, z^*)\in \Pi(\widetilde{P}(X))$ such that
$$
\nu(\widetilde{S}) = \bigl|z^*(\widetilde{S}(z))\bigr|=1,\quad \| z-y\| < \frac{\varepsilon_0}{3},\quad \| z^* - y^*\| < \frac{\varepsilon_0}{3},\quad \| \widetilde{S} - \widetilde{T} \| < \frac{\varepsilon_0}{3}.
$$
Let $t=i(z)\in B_X$ and $t^*=\widetilde{P}^*(z^*)\in B_{X^*}$. We have that
$$t^*(t) = z^*(\widetilde{P}(i(z))) = z^*(z) = 1.$$
Thus $(t,\, t^*)\in \Pi(X)$, and also, by \eqref{cota1} and \eqref{cota2},
$$\| t - x_1\| \leq \| t - i(y)\| + \|i(y) - x_1\| = \| i(z) - i(y)\| + \| i(y) - x_1\| < \frac{\varepsilon_0}{3} + \frac{2\varepsilon_0}{3} = \varepsilon_0\leq \varepsilon,$$
$$\| t^* - x_1^*\| \leq \| \widetilde{P}^*(z^*) - \widetilde{P}^*(y^*)\| + \| \widetilde{P}^*(y^*) - x_1^*\| < \frac{\varepsilon_0}{3} + \frac{2\varepsilon_0}{3} = \varepsilon_0\leq \varepsilon.$$
We define $S=i\circ \widetilde{S} \circ \widetilde{P}\colon X\longrightarrow X$, which is a compact operator. It is clear that $\nu(S)\geq 1$ since
$$|t^* (S(t))| = |z^*(\widetilde{P}(i(\widetilde{S}(\widetilde{P}(i(z))))))| = |z^*(\widetilde{S}(z))|=1.$$
Also,
\begin{align*}
\| S - T\| &= \| i\circ \widetilde{S} \circ \widetilde{P} - T\| \\
&\leq \|i\circ \widetilde{S}\circ \widetilde{P} - i\circ \widetilde{T}\circ \widetilde{P}\| + \| i\circ \widetilde{T}\circ \widetilde{P} - P T P\| + \| P T P - T\| \\
&= \|i\circ \widetilde{S}\circ \widetilde{P} - i\circ \widetilde{T}\circ \widetilde{P}\| + \left\| \frac{P T P}{\nu(T_1)} - P T P\right\| + \| P T P - T\| \\
&\leq \| \widetilde{S} - \widetilde{T}\| + \|T\| \cdot \left| \frac{1}{\nu(T_1)}-1 \right| + \| P T P - T\|
\intertext{and, since $\| T \| \leq \frac{1}{n_K(X)}$, $1-\varepsilon_0 \leq \nu(T_1)\leq 1$, and $6\eta'(\varepsilon)\leq \frac{\varepsilon_0}{3}$, we continue as:}
& \leq \frac{\varepsilon_0}{3} + \frac{\varepsilon_0}{(1-\varepsilon_0)n_K(X)} + 6\eta'(\varepsilon) \leq \varepsilon_0\left( \frac{2}{3}+\frac{1}{(1-\varepsilon_0)n_K(X)} \right)< \varepsilon.
\end{align*}
We finish the proof if we prove that $\nu(S)\leq 1$. We consider the following cases:
\begin{itemize}
\item Case 1: if $\widetilde{P}$ is an absolute projection and $i$ is the natural inclusion, as a consequence of \cite[Lemma 3.3]{CMM}, we get that
$$\nu(S)=\nu(i\circ \widetilde{S}\circ \widetilde{P}) = \nu(\widetilde{S}) = 1.$$
\item Case 2: if $n_K(X)=n_K(\widetilde{P}(X))=1$, then
$$\nu(S) = \| S\| \leq \| \widetilde{S} \| = \nu(\widetilde{S}) = 1.$$
\end{itemize}
Hence, the result follows in the two cases.
\end{proof}

We will now provide some applications and consequences of the previous lemma. Given a continuous projection $P\colon X \longrightarrow X$, if we set $\widetilde{P}\colon X\longrightarrow \widetilde{P}(X)=P(X)\subset X$ (that is, $\widetilde{P}$ is just the operator $P$ with a restricted codomain) and $i\colon P(X) \longrightarrow X$ is the natural inclusion then, trivially, we have that $P=i\circ \widetilde{P}$ and that $\widetilde{P}\circ i=\mathrm{Id}_{\widetilde{P}(X)}$. This easy observation allows to get the following particular case of Lemma~\ref{LemaBPBpnuK1}.

\begin{prop}\label{Prop:Projec}
Let $X$ be a Banach space with $n_K(X)>0$. Suppose that there exists a net $\{ P_{\alpha} \}_{\alpha\in \Lambda}$ of norm-one projections on $X$ satisfying that $\{P_\alpha (x)\} \longrightarrow x$ for all $x\in X$ and $\{P_\alpha^*(x^*)\} \longrightarrow x^*$ for all $x^*\in X^*$, and that there exists a function $\eta\colon  (0,1) \longrightarrow (0,1)$ such that all the spaces $P_\alpha (X)$ with $\alpha\in \Lambda$ have the BPBp-nu for compact operators with the function $\eta$.
Suppose, moreover, that for each $\alpha\in \Lambda$, at least one of the following conditions is satisfied:
\begin{enumerate}
\item[(1)] the projection $P_\alpha$ is absolute,
\item[(2)] $n_K(P_\alpha(X))=n_K(X)=1$.
\end{enumerate}
Then, the space $X$ has the BPBp-nu for compact operators.
\end{prop}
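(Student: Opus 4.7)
The plan is to verify directly that the hypotheses of Lemma~\ref{LemaBPBpnuK1} are satisfied by taking, for each finite collection of data, a suitable element of the net $\{P_\alpha\}_{\alpha\in\Lambda}$. More precisely, given $\delta>0$, functionals $x_1^*,\ldots,x_n^*\in B_{X^*}$ and vectors $x_1,\ldots,x_\ell\in B_X$, the hypothesis that $P_\alpha(x)\to x$ for every $x\in X$ and $P_\alpha^*(x^*)\to x^*$ for every $x^*\in X^*$ allows us to pick $\alpha\in\Lambda$ large enough so that simultaneously
\[
\|P_\alpha(x_j)-x_j\|<\delta\quad (1\le j\le \ell)\qquad\text{and}\qquad \|P_\alpha^*(x_j^*)-x_j^*\|<\delta\quad (1\le j\le n).
\]

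With this $\alpha$ fixed, I would define $\widetilde{P}\colon X\longrightarrow P_\alpha(X)$ to be the projection $P_\alpha$ viewed with restricted codomain, and let $i\colon P_\alpha(X)\longrightarrow X$ be the natural inclusion; then $P:=i\circ\widetilde{P}=P_\alpha$. Both $\widetilde{P}$ and $i$ have norm one (the former since $\|P_\alpha\|=1$, the latter trivially). Condition~(i) and condition~(ii) of Lemma~\ref{LemaBPBpnuK1} hold by the choice of $\alpha$, and condition~(iii), namely $\widetilde{P}\circ i=\mathrm{Id}_{\widetilde{P}(X)}$, is immediate from the fact that $P_\alpha$ is a projection onto $P_\alpha(X)$. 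Condition~(iv) is exactly the uniform BPBp-nu for compact operators of the spaces $P_\alpha(X)$ provided by hypothesis, with the common modulus $\eta$.

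Finally, for condition~(v), by hypothesis at least one of the two alternatives holds for the chosen $\alpha$: if $P_\alpha$ is an absolute projection, we are in the first case of~(v) since $i$ is the natural inclusion; if instead $n_K(P_\alpha(X))=n_K(X)=1$, then $n_K(\widetilde{P}(X))=n_K(X)=1$ and we fall in the second case. In either situation, all the requirements of Lemma~\ref{LemaBPBpnuK1} are met, and applying that lemma yields that $X$ has the BPBp-nu for compact operators.

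The proof is essentially a bookkeeping exercise; there is no real obstacle, since the only non-trivial ingredient is Lemma~\ref{LemaBPBpnuK1} itself, and the two-case structure in condition~(v) of that lemma has been anticipated precisely to accommodate hypotheses of the present form.
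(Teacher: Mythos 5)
Your proposal is correct and coincides with the paper's own (implicit) argument: the paper obtains the proposition as an immediate particular case of Lemma~\ref{LemaBPBpnuK1}, using exactly the same factorization $P_\alpha=i\circ\widetilde{P}$ with $\widetilde{P}$ the projection with restricted codomain and $i$ the natural inclusion, and choosing $\alpha$ large enough for the given finite families of points and functionals. Nothing is missing.
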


We may now obtain the following consequence of the above result. Given a Banach space $X$ and $m\in \mathbb{N}$, the space $\ell_\infty^m(X)$ represents the $\ell_\infty$-sum of $m$ copies of $X$, and we will write $\ell_\infty(X)$ for the $\ell_\infty$-sum of countably infinitely many copies of $X$. Similarly, $c_0(X)$ is the $c_0$-sum of countably infinitely many copies of $X$. When $X=\mathbb{K}$, we just write $\ell_\infty^m$ for $\ell_\infty^m(\mathbb{K})$.

\begin{cor}\label{CorEquiv}
Let $X$ be a Banach space with $n_K(X)>0$. Then, the following statements are equivalent:
\begin{enumerate}
\item[(i)] The space $c_0(X)$ has the BPBp-nu for compact operators.
\item[(ii)] There is a function $\eta\colon  (0,1) \longrightarrow (0,1)$ such that all the spaces $\ell_{\infty}^n(X)$, with $n\in\mathbb{N}$, have the BPBp-nu for compact operators with the function $\eta$.
\end{enumerate}
Moreover, if $X$ is finite dimensional, these properties hold whenever $c_0(X)$ or $\ell_{\infty}(X)$ have the BPBp-nu.
\end{cor}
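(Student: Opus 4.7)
The plan is to deduce (ii) $\Rightarrow$ (i) directly from Proposition~\ref{Prop:Projec}, while (i) $\Rightarrow$ (ii) rests on a symmetric extension--restriction argument that exploits the $M$-summand embedding $\ell_\infty^n(X) \hookrightarrow c_0(X)$ together with a Bishop--Phelps--Bollob\'as correction of the restricted data. The moreover assertion then follows by combining these implications with a trivial specialisation in the $c_0(X)$ case and a parallel extension argument in the $\ell_\infty(X)$ case.

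For (ii) $\Rightarrow$ (i), I take the canonical $M$-projections $P_m\colon c_0(X) \to c_0(X)$ onto the first $m$ coordinates, whose ranges are isometric to $\ell_\infty^m(X)$. Each $P_m$ is absolute, and $\{P_m(z)\}$, $\{P_m^*(z^*)\}$ converge in norm to $z \in c_0(X)$ and $z^* \in \ell_1(X^*) = c_0(X)^*$ respectively. Because $X$ embeds as an $M$-summand of $c_0(X)$ (first coordinate), standard arguments using \cite[Lemma~3.3]{CMM} yield $n_K(c_0(X)) \geq n_K(X) > 0$. Hypothesis~(ii) supplies a common BPBp-nu modulus for the ranges $P_m(c_0(X))$, so Proposition~\ref{Prop:Projec} applies in its absolute-projection clause~(1) and produces~(i).

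For (i) $\Rightarrow$ (ii), fix $n$ and let $\eta$ witness (i); I define $\eta'$ to be a sufficiently small function of $\eta$ and $n_K(X)$. Given $T \in \mathcal{K}(\ell_\infty^n(X))$ with $\nu(T)=1$ and $(x,x^*) \in \Pi(\ell_\infty^n(X))$ satisfying $|x^*(T(x))| > 1-\eta'(\varepsilon)$, extend by $\hat{T} := i_n \circ T \circ P_n \in \mathcal{K}(c_0(X))$, where $i_n$, $P_n$ denote the natural inclusion and projection; by \cite[Lemma~3.3]{CMM} we have $\nu(\hat{T})=\nu(T)=1$. Setting $(\tilde{x},\tilde{x}^*) := (i_n x, P_n^* x^*) \in \Pi(c_0(X))$ one has $\tilde{x}^*(\hat{T}\tilde{x})=x^*(Tx)$. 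Apply (i) to $(\hat{T},\tilde{x},\tilde{x}^*)$ to obtain $\hat{S} \in \mathcal{K}(c_0(X))$ and $(y,y^*) \in \Pi(c_0(X))$ with $\nu(\hat{S})=|y^*(\hat{S}(y))|=1$ and the standard approximations. Then set $S := P_n \hat{S} i_n \in \mathcal{K}(\ell_\infty^n(X))$, and correct the truncated pair $(P_n y, i_n^* y^*)$, which lies almost in $\Pi(\ell_\infty^n(X))$ thanks to the tail estimates $\sup_{k>n}\|y_k\|$ and $\sum_{k>n}\|y_k^*\|$, to an exact pair $(y_0,y_0^*) \in \Pi(\ell_\infty^n(X))$ via classical Bishop--Phelps--Bollob\'as (\cite[Corollary~2.4.b]{CKMMPRB}).

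The main obstacle is upgrading the approximate attainment inherited from $c_0(X)$ to the exact equalities $|y_0^*(S(y_0))|=\nu(S)=1$ required on $\ell_\infty^n(X)$. The identity $\hat{T} = i_n P_n \hat{T} i_n P_n$ gives $\|i_n S P_n - \hat{S}\| \leq 2\|\hat{S} - \hat{T}\|$, and a further application of \cite[Lemma~3.3]{CMM} yields $|\nu(S)-1|$ of the order of the BPBp-nu approximation on $c_0(X)$. Rescaling $S$ to $S/\nu(S)$ normalises the numerical radius, and a final small rank-one adjustment in the direction $y_0 \otimes y_0^*$, whose size is controlled by the approximation budget and absorbed by the uniform lower bound $n_K(\ell_\infty^n(X)) \geq n_K(X) > 0$, enforces the exact attainment while respecting all three distance bounds, yielding a modulus $\eta'$ independent of $n$. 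For the moreover: if $X$ is finite-dimensional then $\mathcal{K}(\ell_\infty^n(X)) = \mathcal{L}(\ell_\infty^n(X))$ for every $n$, so BPBp-nu of $c_0(X)$ trivially specialises to (i); and BPBp-nu of $\ell_\infty(X)$ implies (ii) via the same extension--restriction argument applied with $\ell_\infty(X)$ in place of $c_0(X)$, where the extended operator $\hat{T}$ is automatically finite-rank, bypassing any compactness preservation issue.
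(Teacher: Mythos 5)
Your proof of (ii)$\Rightarrow$(i) is exactly the paper's: the coordinate projections of $c_0(X)$ onto the first $m$ coordinates are absolute, their ranges are $\ell_\infty^m(X)$, and Proposition~\ref{Prop:Projec} applies. For (i)$\Rightarrow$(ii), however, the paper does not argue directly: it invokes \cite[Proposition 4.3]{CDJM} (and the proof of \cite[Theorem 4.1]{CDJM}) to pass the property to $\ell_\infty$-summands with the same function $\eta$. Your attempt to reprove this by extension--restriction has a genuine gap precisely at the step you call ``the main obstacle''. After setting $S:=P_n\widehat{S}i_n$ and normalising, what you actually obtain is only that $|y_0^*(S(y_0))|$ is \emph{close} to $\nu(S)=1$, i.e.\ a weak-BPBp-nu type conclusion. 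The proposed repair --- adding a small multiple $c$ of the rank-one operator $z\mapsto y_0^*(z)\,y_0$ with $c\approx 1-y_0^*(S(y_0))$ --- raises the value at $(y_0,y_0^*)$ to $1$, but it may raise $\nu$ by up to the same amount (the numerical radius of $S$ can be approached at pairs far from $(y_0,y_0^*)$), so the required equality $\nu(S')=|y_0^*(S'(y_0))|$ is not achieved unless $S$ already attained its numerical radius at $(y_0,y_0^*)$, which is what you are trying to prove. Upgrading approximate to exact attainment is exactly the weak-versus-genuine BPBp-nu issue, which in this paper is only overcome through the second-numerical-index machinery (Propositions~\ref{prop:nXpos-wEquiv} and~\ref{prop:nK'pos-Equiv}) or by citing the absolute-sum theorem of \cite{CDJM}; a rank-one bump in the attaining direction does not do it.

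Two further points. First, the index inequalities you use are quoted in the wrong direction: \cite[Lemma 3.3]{CMM} (extend an operator on the summand by zero through the absolute projection, preserving $\nu$ and the norm) yields $n_K(c_0(X))\le n_K(X)$ and $n_K(\ell_\infty^n(X))\le n_K(X)$, whereas the lower bounds $n_K(\ell_\infty^n(X))\ge n_K(X)$ and $n_K(c_0(X))>0$ that you need (to bound $\|T\|$ and $\|\widehat S\|$, and to run Proposition~\ref{Prop:Projec} on $c_0(X)$) are the nontrivial direction and require a separate argument; the paper is admittedly terse about this for $c_0(X)$, but your claimed derivation is not a proof of it. Second, in the ``moreover'' part, the BPBp-nu of $c_0(X)$ does \emph{not} ``trivially specialise'' to the BPBp-nu for compact operators of $c_0(X)$: the approximating operator supplied by the BPBp-nu on the infinite-dimensional space $c_0(X)$ need not be compact. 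The paper's route is different: by \cite[Theorem 4.1]{CDJM}, the BPBp-nu of $c_0(X)$ or $\ell_\infty(X)$ passes to all summands $\ell_\infty^n(X)$ with a common $\eta$, where every operator is compact because $X$ is finite dimensional; this gives (ii), and then (ii)$\Rightarrow$(i) finishes.
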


\begin{proof}
That (ii) implies (i) is a consequence of Proposition \ref{Prop:Projec} since for every $n\in \mathbb{N}$, the operator on $c_0(X)$ which is the identity on the first $n$ coordinates and $0$ elsewhere is an absolute projection whose image is isometrically isomorphic to  $\ell_{\infty}^n(X)$.

(i) implies (ii) is a consequence of \cite[Proposition 4.3]{CDJM}, as one can easily see $\ell_\infty^n(X)$ as an $\ell_\infty$-summand of $c_0(X)$. Let us comment that the function $\eta$ valid for all $\ell_\infty^n(X)$ is the function valid for $c_0(X)$ and this actually follows from the proof of \cite[Theorem 4.1]{CDJM} (from which \cite[Proposition 4.3]{CDJM} actually follows).

Finally, when $X$ has finite dimension, if $c_0(X)$ or $\ell_{\infty}(X)$ has the BPBp-nu, then condition (ii) holds by using \cite[Theorem 4.1]{CDJM} and the fact that $\ell_\infty^n(X)$ is finite-dimensional and so, every operator from $\ell_{\infty}^n(X)$ to itself is compact.
\end{proof}

As stated in Examples \ref{EjemplosIniciales}, that $c_0$ and the spaces $\ell_\infty^n$ for $n\in \mathbb{N}$ have the BPBp-nu for compact operators is a consequence of \cite[Corollary 4.2]{GK} and \cite[Proposition 2]{KLM14}. Actually, the fact that all the space $\ell_\infty^n$ have the BPBp-nu with the same function $\eta$ follows from \cite[Corollary 4.2]{GK} and (the proof of) \cite[Theorem 4.1]{CDJM}. However, let us note that we can also get this result as a consequence of our previous corollary.

\begin{cor}\label{Corc0}
There is a function $\eta\colon  (0,1) \longrightarrow (0,1)$ such that the space $c_0$ and the spaces $\ell_{\infty}^n$ with $n\in \mathbb{N}$, have the BPBp-nu for compact operators with the function $\eta$.
\end{cor}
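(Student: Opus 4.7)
The plan is to apply Corollary \ref{CorEquiv} with $X$ equal to the scalar field $\mathbb{K}$. The hypotheses are immediate: $\mathbb{K}$ is one-dimensional, so $n_K(\mathbb{K})=1>0$, and $c_0(\mathbb{K})=c_0$ and $\ell_\infty^n(\mathbb{K})=\ell_\infty^n$.

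By Examples \ref{EjemplosIniciales}(b), $c_0$ has the BPBp-nu for compact operators with some function $\eta\colon(0,1)\to(0,1)$ (obtained from the adaptation of \cite[Corollary 4.2]{GK}). This verifies condition (i) of Corollary \ref{CorEquiv} for $X=\mathbb{K}$. The implication (i) $\Rightarrow$ (ii) then yields a common function $\eta'$ such that every $\ell_\infty^n$ has the BPBp-nu for compact operators with $\eta'$. Moreover, as remarked in the proof of Corollary \ref{CorEquiv}, this $\eta'$ is precisely the function valid for $c_0$, i.e.\ one may take $\eta'=\eta$. Hence the same $\eta$ works simultaneously for $c_0$ and for every $\ell_\infty^n$.

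If one wished to avoid relying on the internal remark that the function is literally preserved, an alternative would be to cite \cite[Theorem 4.1]{CDJM} directly: once $c_0$ is known to have BPBp-nu for compact operators with some function, that same function transfers to any $\ell_\infty$-summand, and each $\ell_\infty^n$ can be realized as an $\ell_\infty$-summand of $c_0$ via the natural coordinate projection (which is an absolute projection of norm one). This gives a self-contained route to the common $\eta$.

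There is no real obstacle here: the entire content of the corollary is an immediate specialization of Corollary \ref{CorEquiv} and Examples \ref{EjemplosIniciales}(b). The statement is included mainly to record explicitly the common modulus $\eta$ so it can be reused later in the paper.
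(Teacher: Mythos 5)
Your proposal is correct and matches the paper's own reasoning: the paper obtains this corollary precisely as the specialization of Corollary~\ref{CorEquiv} to the scalar field (using Examples~\ref{EjemplosIniciales}(b) for $c_0$), and also mentions your alternative route via \cite[Corollary 4.2]{GK} together with (the proof of) \cite[Theorem 4.1]{CDJM}. Nothing is missing.
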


Additionally, \cite[Proposition 4.3]{CDJM} also implies that whenever $\ell_{\infty}^n(X)$ has the BPBp-nu for compact operators for some $n\in \mathbb{N}$, then so does $X$, although the converse remains unknown in general (even for $n=2$).

Another consequence of Proposition \ref{Prop:Projec} is the following:

\begin{cor}\label{Cor:ProjSubsets}
Let $X$ be a Banach space with $n_K(X)>0$. Suppose that there exists a net $\{ P_{\alpha} \}_{\alpha\in \Lambda}$ of norm-one projections on $X$ such that $\alpha \preceq \beta$ implies $P_\alpha(X) \subset P_\beta(X)$, that $\{ P_\alpha^*(x^*)\} \longrightarrow x^*$ for all $x^*\in X^*$, and that there exists a function $\eta\colon  (0,1) \longrightarrow (0,1)$ such that all the spaces $P_\alpha (X)$ with $\alpha\in \Lambda$ have the BPBp-nu for compact operators with the function $\eta$.
Suppose, moreover, that for each $\alpha\in \Lambda$, at least one of the following conditions is satisfied:
\begin{enumerate}
\item[(1)] the projection $P_\alpha$ is absolute,
\item[(2)] $n_K(P_\alpha(X))=n_K(X)=1$.
\end{enumerate}
Then, the space $X$ has the BPBp-nu for compact operators.
\end{cor}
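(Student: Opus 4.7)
The plan is to reduce Corollary~\ref{Cor:ProjSubsets} directly to Proposition~\ref{Prop:Projec}. The only hypothesis of Proposition~\ref{Prop:Projec} that is not literally stated is the pointwise convergence $\{P_\alpha(x)\}\longrightarrow x$ for every $x\in X$; once this is derived from the two conditions (nestedness of the ranges and $\{P_\alpha^*(x^*)\}\longrightarrow x^*$), the rest of the assumptions (compact numerical index of $X$, uniform BPBp-nu for compact operators on the pieces $P_\alpha(X)$ with the same $\eta$, and the alternative between absoluteness of $P_\alpha$ and $n_K(P_\alpha(X))=n_K(X)=1$) carry over verbatim.

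First I would set $M:=\bigcup_{\alpha\in\Lambda}P_\alpha(X)$ and observe that $P_\alpha(y)\longrightarrow y$ holds trivially on $M$: if $y\in P_{\alpha_0}(X)$, then for every $\alpha\succeq\alpha_0$ we have $y\in P_\alpha(X)$ and hence $P_\alpha(y)=y$ because $P_\alpha$ is a projection. Next I would show that $M$ is norm-dense in $X$ via a Hahn-Banach argument using the weak$^*$ convergence of the adjoints: if some $x^*\in X^*\setminus\{0\}$ vanished on $M$, then for every $\alpha\in\Lambda$ and every $z\in X$ we would have
\[
[P_\alpha^*(x^*)](z)=x^*(P_\alpha(z))=0,
\]
so $P_\alpha^*(x^*)=0$ for all $\alpha$; since $\{P_\alpha^*(x^*)\}\longrightarrow x^*$ by hypothesis, this forces $x^*=0$, a contradiction.

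Finally I would extend the convergence from $M$ to the whole space using the uniform bound $\|P_\alpha\|=1$: given $x\in X$ and $\varepsilon>0$, pick $y\in M$ with $\|x-y\|<\varepsilon/2$ and an index $\alpha_0$ with $y\in P_{\alpha_0}(X)$; then for every $\alpha\succeq\alpha_0$,
\[
\|P_\alpha(x)-x\|\leq \|P_\alpha(x-y)\|+\|P_\alpha(y)-y\|+\|y-x\|\leq 2\|x-y\|<\varepsilon.
\]
This yields the missing pointwise convergence, and Proposition~\ref{Prop:Projec} applies to give the BPBp-nu for compact operators of $X$. There is no serious obstacle in this argument: the Hahn-Banach step, which is the only non-routine point, is a standard duality observation, and the alternative hypotheses on each $P_\alpha$ transfer to Proposition~\ref{Prop:Projec} without modification.
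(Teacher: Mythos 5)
Your proposal is correct and follows essentially the same route as the paper: reduce to Proposition~\ref{Prop:Projec} by showing that the hypotheses already force $\{P_\alpha(x)\}\longrightarrow x$ in norm for every $x\in X$. The only difference is that the paper obtains this convergence by citing \cite[Corollary~2.4]{DGMM}, whereas you prove it directly (the ranges $P_\alpha(X)$ form, by directedness, an increasing family whose union is dense by your Hahn--Banach argument, and the uniform bound $\|P_\alpha\|=1$ extends the convergence from that union to all of $X$), and this in-line argument is sound.
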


\begin{proof}
Observe that in order to apply Proposition \ref{Prop:Projec} we only need that $\{ P_\alpha x\} \longrightarrow x$ in norm for all $x\in X$. But this is proved in  \cite[Corollary 2.4]{DGMM}, so we are done.
\end{proof}

The previous result can be used to prove that all the preduals of $\ell_1$ have the BPBp-nu for compact operators.

\begin{cor}\label{Cor:PredualEll1}
Let $X$ be a Banach space such that $X^*$ is isometrically isomorphic to $\ell_1$. Then $X$ has the BPBp-nu for compact operators.
\end{cor}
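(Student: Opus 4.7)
The plan is to apply Corollary~\ref{Cor:ProjSubsets}. Since $X^{*}\cong\ell_{1}$ is separable, $X$ itself is separable, and the Lazar--Lindenstrauss representation theorem for isometric preduals of $\ell_{1}$ provides an increasing sequence $X_{1}\subset X_{2}\subset\cdots$ of finite-dimensional subspaces of $X$, each isometric to some $\ell_{\infty}^{k_{n}}$, with $\overline{\bigcup_{n}X_{n}}=X$, together with compatible norm-one projections $P_{n}\colon X\longrightarrow X_{n}$. The ranges $P_{n}(X)=X_{n}$ are then nested as the Corollary requires, and the adjoints $P_{n}^{*}$ act on $X^{*}=\ell_{1}$ as contractions with nested ranges whose union is norm dense in $\ell_{1}$, from which one concludes $P_{n}^{*}(x^{*})\longrightarrow x^{*}$ in norm for every $x^{*}\in X^{*}$.

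For the remaining hypotheses, I would use alternative~(2) of Corollary~\ref{Cor:ProjSubsets}. Preduals of $\ell_{1}$ have numerical index one --- a consequence of $X^{*}$ being an $L$-space, valid in both the real and the complex setting --- so $n_{K}(X)\geq n(X)=1$; and each $X_{n}\cong\ell_{\infty}^{k_{n}}$ is a finite-dimensional $C(K)$-space, hence also satisfies $n_{K}(X_{n})=n(X_{n})=1$ in both scalar fields. Finally, Corollary~\ref{Corc0} furnishes a single function $\eta$ with respect to which every $\ell_{\infty}^{n}$, and therefore every $X_{n}$, has the BPBp-nu for compact operators. All the ingredients of Corollary~\ref{Cor:ProjSubsets} are then in place and deliver the conclusion.

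The delicate step is verifying the norm convergence $P_{n}^{*}(x^{*})\longrightarrow x^{*}$ on $X^{*}$: strong convergence of a sequence of projections $P_{n}$ on $X$ does not in general imply norm convergence of the adjoints on $X^{*}$, and a clean $M$-ideal decomposition of $X$ is not available for arbitrary preduals of $\ell_{1}$ (the space $c$ of convergent sequences has only finitely supported vectors as finite-dimensional $M$-summands, which do not span a dense subspace). The required rigidity must therefore be extracted from the specific Lazar--Lindenstrauss presentation together with the $\ell_{1}$-geometry of $X^{*}$, where the $P_{n}^{*}$ behave as truncation-type contractions whose tails vanish in $\ell_{1}$-norm.
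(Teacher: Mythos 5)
Your overall architecture is the same as the paper's: reduce to Corollary~\ref{Cor:ProjSubsets} via a sequence of norm-one projections with nested ranges isometric to finite-dimensional $\ell_\infty$ spaces, use Corollary~\ref{Corc0} for a single function $\eta$, and use $n_K(X)=n_K(P_n(X))=1$ to invoke alternative (2). All of that part is fine. The problem is that the one genuinely nontrivial ingredient --- the existence of norm-one projections $P_n$ whose ranges are an increasing sequence of $\ell_\infty^{k_n}$'s with dense union \emph{and} whose adjoints satisfy $P_n^*(x^*)\to x^*$ in norm on $X^*=\ell_1$ --- is asserted rather than proved, and you concede this yourself in your last paragraph. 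The Lazar--Lindenstrauss representation gives the increasing subspaces $X_n\cong\ell_\infty^{k_n}$ with dense union, and injectivity of $\ell_\infty^{k_n}$ gives \emph{some} contractive projection onto each $X_n$, but it gives no control whatsoever on the adjoints: $P_n^*$ is a norm-one projection on $\ell_1$ with range $(\ker P_n)^{\perp}$, and this range depends on the chosen complement, not on $X_n$ alone. Your claim that the adjoints have ``nested ranges whose union is norm dense in $\ell_1$'' is exactly the statement that needs proof; it does not follow from the nestedness of the $X_n$, nor from the $\ell_1$-geometry in any soft way, and once it is granted the strong convergence of $P_n^*$ is indeed easy (a point you correctly treat as routine). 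So the proposal, as written, has a genuine gap precisely at its self-identified ``delicate step.''

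For comparison, the paper closes this gap by quoting a deep theorem of Gasparis \cite{Gasparis} on contractively complemented subspaces of separable $L_1$-preduals, as exploited in the proof of \cite[Theorem 3.6]{DGMM}: that argument produces projections $P_n$ with $P_{n+1}P_n=P_n$, with $P_n(X)$ isometric to $\ell_\infty^n$, and with finite-dimensional subspaces $Y_n\subset\ell_1$ fixed by $P_n^*$ whose union is dense in $\ell_1$, which is what yields $P_n^*(x^*)\to x^*$. If you want to complete your proof along your own lines, you would have to either reprove this structural fact or cite it; the numerical-radius part of your argument then goes through exactly as you describe.
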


\begin{proof}
By using a deep result due to Gasparis \cite{Gasparis}, it is shown in the proof of \cite[Theorem 3.6]{DGMM} that there exists a sequence of norm-one projections $P_n\colon X\longrightarrow X$ satisfiying that $P_{n+1}P_n=P_n$ (and so, $P_n(X) \subset P_{n+1}(X)$), that $P_n(X)$ is isometrically isomorphic to $\ell_\infty^n$, and also that $P_n^*(x^*)\longrightarrow x^*$ for all $x^*\in X^*$ (this claim holds since the sets $Y_n$ defined on that proof satisfy that their union is dense in $X^*=\ell_1$).

Next, as $P_n(X)$ is isometrically isomorphic to $\ell_\infty^n$, on the one hand we have that all the spaces $P_n(X)$ have the BPBp-nu for compact operators with the same function $\eta$ as a consequence of Corollary \ref{Corc0}. On the other hand, $n(X)=n(P_n(X))=1$ for all $n\in \mathbb{N}$ (see \cite{KaMaPa}, for instance) so, in particular, $n_K(X)=n_K(P_n(X))=1$ for all $n\in \mathbb{N}$. Finally, Corollary \ref{Cor:ProjSubsets} provide the desired result.
\end{proof}

\section[Second ingredient]{Second ingredient: a strong approximation property of
\texorpdfstring{$C_0(L)$}{C0(L)} spaces and their duals}\label{SectionCL}

The aim of this section is to provide some strong approximation property of $C_0(L)$ spaces and their duals which allow to use Lemma \ref{LemaBPBpnuK1} (actually, Proposition \ref{Prop:Projec}) to give a proof of Theorem \ref{TheoremMain}. We need a number of technical lemmas.

\begin{lemma}\label{Lemma:Topol}
Let $L$ be a locally compact space, let $\{ K_1, \ldots, K_M \}$ be a family of pairwise disjoint non-empty compact subsets of $L$, and let $K\subset L$ be a compact set with $\bigcup\limits_{m=1}^M K_m \subset K$. If $\{ U_1,\ldots, U_R \}$ is a family of relatively compact open subsets of $L$ covering $K$ such that for each $m$ there is an $r(m)$ with $K_m\subset U_{r(m)}$, $m=1,\ldots, M$, then there exists an open refinement $\{ Z_1,\ldots, Z_S \}$, $M\leq S\leq R+M$ with $Z_1,\ldots, Z_M$ pairwise disjoint, satisfying:
\begin{enumerate}[(1)]
\item For $m=1,\ldots, M$, $K_m\subset Z_m$, and $K_m\cap Z_s=\emptyset$ for all $s\in \{ 1,\ldots, S \} \backslash \{ m\}$.
\item For all $s_0>M$, there exists $z_{s_0}\in Z_{s_0}\backslash \left( \bigcup\limits_{s\neq s_0} Z_s \right)$.
\end{enumerate}
\end{lemma}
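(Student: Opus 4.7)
The overall strategy is to set $Z_m$ to be small pairwise disjoint open neighborhoods $V_m$ of $K_m$ inside $U_{r(m)}$, and to take the remaining $Z_s$'s to be of the form $U_r$ with a suitable closed thickening of $\bigcup_m K_m$ removed, discarding the redundant ones afterwards. Concretely, using an induction on $M$ based on the fact that in a Hausdorff space disjoint compact sets can be separated by disjoint open neighborhoods, I first obtain pairwise disjoint open sets $A_1,\ldots,A_M$ in $L$ with $K_m\subset A_m$, and then set $V_m:=A_m\cap U_{r(m)}$, which are still pairwise disjoint, open, and satisfy $K_m\subset V_m\subset U_{r(m)}$. By local compactness of $L$, for each $m$ I then choose an open $W_m$ with $K_m\subset W_m$ and $\overline{W_m}\subset V_m$ (with $\overline{W_m}$ compact), and finally define
$$Y_r := U_r \setminus \bigcup_{m=1}^M \overline{W_m}, \qquad r=1,\ldots,R,$$
which are open since $\bigcup_m \overline{W_m}$ is closed.

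The collection $\{V_1,\ldots,V_M,Y_1,\ldots,Y_R\}$ is an open refinement of $\{U_1,\ldots,U_R\}$ that covers $K$: for $x\in K$, either $x\in\overline{W_m}\subset V_m$ for some $m$, or $x\in U_r\setminus\bigcup_m\overline{W_m}=Y_r$ for some $r$. Setting $Z_m:=V_m$ for $m=1,\ldots,M$, condition~(1) is then immediate: $K_m\subset Z_m$; $K_m\cap V_{m'}=\emptyset$ for $m'\neq m$ by pairwise disjointness of the $V$'s; and $K_m\cap Y_r=\emptyset$ since $K_m\subset\overline{W_m}$ is excluded from every $Y_r$.

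It remains to prune the collection so as to achieve condition~(2) together with the bound $S\leq R+M$. I would apply a finite irredundancy procedure to the $Y_r$'s: iteratively discard any $Y_r$ still present in the collection that is contained in the union of the currently remaining open sets. The process terminates after at most $R$ steps and preserves coverage of $K$ at each stage, since removing a redundant set does not shrink the union. At termination, each surviving $Y_r$ must contain a point $z_r$ not lying in any other surviving set, and relabeling the surviving $Y_r$'s as $Z_{M+1},\ldots,Z_S$ yields condition~(2) with $M\leq S\leq R+M$. The step I expect to be most delicate is the initial one, namely producing pairwise disjoint open sets $V_m$ simultaneously contained in $U_{r(m)}$: the naive separation of the $K_m$'s followed by intersection with $U_{r(m)}$ does the job, but one has to set up the induction carefully so that no desired property is lost when the intersections are taken.
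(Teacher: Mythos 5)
Your proof is correct, and its first half coincides with the paper's: both arguments separate the $K_m$ by pairwise disjoint open sets $V_m\subset U_{r(m)}$ and then remove from each $U_r$ a closed set containing $\bigcup_{m=1}^M K_m$ (the paper removes $\bigcup_m K_m$ itself, which is closed since compact; you remove the closed thickening $\bigcup_m\overline{W_m}$ --- a cosmetic difference). Where you genuinely diverge is in how condition (2) is produced, which is the real content of the lemma. The paper makes a single ordered pass over the sets $W_r=U_r\setminus\bigcup_m K_m$: it discards any $W_r$ covered by what has already been selected, and otherwise picks a witness point and deletes the previously chosen witness points from all subsequent sets, so the kept sets get punctured and one uses that singletons are closed. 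You instead leave the sets untouched and run a ``discard while redundant'' loop on the $Y_r$'s: repeatedly remove one that is contained in the union of the other currently remaining sets. Termination of that loop is precisely the statement that each surviving $Y_r$ contains a point outside all the other members of the final family, i.e.\ condition (2), while coverage of $K$, the refinement property, and the bound $M\le S\le R+M$ are clearly preserved; this maximality-type argument is arguably cleaner than the paper's puncturing. The only point to tighten in the write-up is the redundancy test itself: it should read ``contained in the union of the \emph{other} currently remaining sets,'' and that union must include $V_1,\ldots,V_M$ as well as the surviving $Y_r$'s, so that the private points also avoid $Z_1,\ldots,Z_M$ --- which is evidently what you intend, given the conclusion you draw from termination.
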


\begin{proof}
As $\{ K_1, \ldots, K_M\}$ are pairwise disjoint, there exist $\{ V_1,\ldots V_M\}$ pairwise disjoint open subsets of $L$ with $K_m\subset V_m\subset U_{r(m)}$, $m=1,\ldots, M$.

The family $\left\{ V_1, \ldots, V_M, U_1\backslash \left( \bigcup\limits_{m=1}^M K_m\right), \ldots, U_R \backslash \left( \bigcup\limits_{m=1}^M K_m\right)\right\}$ is another cover of $K$ by open subsets of $L$ subordinated to $\{ U_r\}_{r=1}^R$. We define $Z_m:= V_m$ for $m=1,\ldots,M$, and  $W_r:= U_r \backslash \left( \bigcup\limits_{m=1}^M K_m\right)$ for $r=1,\ldots, R$.

If $W_1 \subset V_1\cup \ldots \cup V_M$, then $\{ V_1, \ldots, V_M, W_2, \ldots, W_R\}$ is again a cover of $K$. If that happens again and again until $W_R$, we have that $\{ Z_1, \ldots, Z_M\}$ is the cover we were looking for. In other case, let $r_1\geq 1$ be the first natural number such that there exists  $w_{r_1}\in W_{r_1}\backslash \left( \bigcup\limits_{m=1}^M V_m\right)$, and denote $Z_{M+1}:=W_{r_1}$. The family $\{ V_1, \ldots, V_M, W_{r_1},W_{r_1+1}, \ldots, W_{R}\}$ is a cover of $K$ by open sets, and then, so is the family $$\bigl\{ V_1, \ldots, V_M, W_{r_1}, W_{r_1+1}\backslash \{ w_{r_1}\}, \ldots, W_{R}\backslash \{ w_{r_1}\}\bigr\}.$$
Consider now $r_2>r_1$ the first natural number such that there exists  $w_{r_2}\in W_{r_2}\backslash \{ w_{r_1}\}$ and $w_{r_2}\notin V_1\cup \ldots \cup V_M \cup W_{r_1}$. Let $Z_{M+2}:= W_{r_2}\backslash \{ w_{r_1}\}$ and proceed as before. In at most $R$ steps, we get $\{ Z_1, \ldots, Z_S\}$, $M\leq S\leq R+M$, such that
\begin{itemize}
\item $K_m\subset Z_m$ for $m=1,\ldots M$.
\item $\left( \bigcup\limits_{m=1}^M K_m\right)\cap Z_s=\emptyset$ for $s>M$.
\item For all $s_0>M$, there exists $w_{r_{s_0-M}}\in Z_{s_0}\backslash \left( \bigcup\limits_{s\neq s_0} Z_s \right)$.\qedhere
\end{itemize}
\end{proof}

We next provide a result showing the existence of certain partitions of the unity. We separate the non-compact case (Lemma \ref{Lemma:PartUni}) and the compact case (Lemma \ref{Lemma:PartUniComp}) for the sake of clarity. We start with the non-compact case.

\begin{lemma}\label{Lemma:PartUni}
Let $L$ be a non-compact locally compact space. Let $K\subset L$ be a compact set and $\{ K_1,\ldots, K_M\}$ a  family of pairwise disjoint non-empty compact subsets of $K$. Given  a  family $\{ U_1, \ldots, U_R\}$ of relatively compact open subsets of $L$ that cover $K$, let $\{ Z_1, \ldots, Z_S\}$ be a family of open subsets of $L$ covering $K$ such that they satisfy the thesis of Lemma~\ref{Lemma:Topol}, and denote by $Z_{S+1}$ the set $L\backslash \left( \bigcup\limits_{s=1}^S Z_s\right)$. Then, there exists a partition of the unity subordinated to $\{ Z_s\}_{s=1}^{S+1}$, $\{ \varphi_s\}_{s=1}^{S+1}$, such that:
\begin{enumerate}
\item[(1)] $\{ \varphi_1,\ldots, \varphi_{M}\}$ have disjoint support.
\item[(2)] $\varphi_m(K_m)\equiv 1$, for $m=1,\ldots, M$.
\item[(3)] For all $M<s\leq S+1$, there exists $z_s\in Z_s$ such that $\varphi_s(z_s)=1$.
\item[(4)] For $s=1,\ldots,S+1$, $\supp(\varphi_s)\subset Z_s$.
\item[(5)] $(\varphi_1 + \cdots + \varphi_S)(x)=1$, for all $x\in K$.
\end{enumerate}
\end{lemma}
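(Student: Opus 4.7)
My plan is to combine Urysohn's lemma for locally compact Hausdorff spaces with the classical recursive partition-of-unity formula. I would construct, for each index $s$, a compact set $C_s \subset Z_s$ carrying the required ``$\equiv 1$'' data (the piece $K_m$ when $s=m\le M$, the distinguished point $z_s$ when $M<s\le S+1$), while the $C_s$ with $s\le S$ collectively cover $K$. Urysohn then yields bumps $f_s \in C_c(L)$ with $f_s\equiv 1$ on $C_s$, $\supp f_s\subset Z_s$, and $0\le f_s\le 1$. Finally I set
\[
\varphi_1:=f_1,\qquad \varphi_s:=f_s\prod_{j=1}^{s-1}(1-f_j)\ \ (2\le s\le S),\qquad \varphi_{S+1}:=f_{S+1}.
\]
The telescoping identity $\sum_{s=1}^S \varphi_s=1-\prod_{s=1}^S(1-f_s)$ is the engine that will deliver property~(5).

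To fix the data, I first use local compactness to find, for each $m\le M$, an open $V_m$ with $K_m\subset V_m\subset\overline{V_m}\subset Z_m$ and $\overline{V_m}$ compact. For $M<s\le S$ I take the point $z_s\in Z_s\setminus\bigcup_{s'\ne s}Z_{s'}$ produced by Lemma~\ref{Lemma:Topol}(2). Since each $U_r$ is relatively compact and every $Z_s$ with $s\le S$ is contained in some $U_r$ by the construction of Lemma~\ref{Lemma:Topol}, the union $\bigcup_{s=1}^S Z_s$ is relatively compact; as $L$ is non-compact, the interior of $Z_{S+1}=L\setminus\bigcup_{s\le S}Z_s$ is non-empty, so I pick $z_{S+1}$ in it. By the standard shrinking argument for open covers of a compact set in an LCH space, there exist compact $C_s\subset Z_s$ ($s=1,\dots,S$) with $K\subset\bigcup_{s=1}^S C_s$; enlarging each $C_s$ (still compactly) inside $Z_s$, I further arrange $\overline{V_m}\subset C_m$ for $m\le M$ and $z_s\in C_s$ for $M<s\le S$. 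For $s=S+1$ I let $C_{S+1}$ be a compact neighborhood of $z_{S+1}$ inside the interior of $Z_{S+1}$. Urysohn's lemma then supplies the $f_s$.

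The verification is largely bookkeeping. Since $\supp\varphi_s\subset\supp f_s\subset Z_s$, property~(4) holds, and the pairwise disjointness of $Z_1,\dots,Z_M$ gives~(1). For~(2), if $x\in K_m$ then $x\notin Z_j$ for $j\ne m$ by Lemma~\ref{Lemma:Topol}(1), hence $f_j(x)=0$ for every $j<m$ and $\varphi_m(x)=f_m(x)\cdot 1=1$. For~(3) with $M<s\le S$, the point $z_s$ lies in no $Z_j$ with $j\ne s$, so $f_j(z_s)=0$ for $j<s$ and $\varphi_s(z_s)=f_s(z_s)=1$; the case $s=S+1$ reduces to $\varphi_{S+1}(z_{S+1})=f_{S+1}(z_{S+1})=1$ by construction. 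Property~(5) is immediate from the telescoping identity, because for $x\in K$ some $f_s(x)$ equals~$1$, making $\prod_{s=1}^S(1-f_s)(x)=0$. The only real obstacle I foresee is the simultaneous bookkeeping in the choice of the $C_s$: they must sit compactly inside $Z_s$, contain the distinguished pieces $K_m$ and points $z_s$, and still collectively cover $K$. Once that is arranged, the rest is a routine use of Urysohn combined with the separation and disjointness already built into Lemma~\ref{Lemma:Topol}.
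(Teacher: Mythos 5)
For the indices $s\le S$ your construction is essentially the paper's, which follows Rudin's partition-of-unity scheme: shrink the cover to compact pieces that contain the $K_m$'s and the distinguished points $z_s$ and still cover $K$, take Urysohn bumps $f_s$ with $\supp f_s\subset Z_s$, and form the telescoping products; your verifications of (1), (2), (4), (5) and of (3) for $M<s\le S$ are correct. The genuine gap is the definition $\varphi_{S+1}:=f_{S+1}$. The lemma asserts the existence of a \emph{partition of the unity} subordinated to $\{Z_s\}_{s=1}^{S+1}$, which is a cover of all of $L$, so the family must satisfy $\sum_{s=1}^{S+1}\varphi_s\equiv 1$ on $L$; this global identity is exactly what is used when the lemma is applied in the proof of Theorem~\ref{theorem:AP_C0(L)}, where $f_j(x)$ is rewritten as $\sum_{s=1}^{S+1}f_j(x)\varphi_s(x)$ at \emph{arbitrary} points $x\in L$ (not only points of $K$), and where it also enters the norm estimates for $P$ and the identification of $P(C_0(L))$ with $\ell_\infty^{S+1}$. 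With your choice the identity fails: at any point of $L$ lying outside $\bigcup_{s\le S}Z_s$ and outside your compact neighbourhood of $z_{S+1}$ all the $\varphi_s$ vanish, so the sum is $0$, and at points of $Z_s\setminus C_s$ where $f_s$ takes intermediate values the sum lies strictly between $0$ and $1$. What you build is a Rudin-type partition of unity relative to the compact set $K$, plus one extra bump; it is not a partition of unity for $\{Z_s\}_{s=1}^{S+1}$ in the sense the statement (and its later use) requires.

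The repair is precisely the paper's choice: discard the extra bump and set $\varphi_{S+1}:=1-(\varphi_1+\cdots+\varphi_S)=\prod_{s=1}^S(1-f_s)$, so that the total sum is identically $1$ by construction. Property (3) for $s=S+1$ survives: since $\bigcup_{s\le S}Z_s$ is relatively compact and $L$ is not compact, you may pick $z_{S+1}\in L\setminus\bigcup_{s\le S}Z_s$ (no interior point is needed), and there every $f_s$ vanishes, whence $\varphi_{S+1}(z_{S+1})=1$. All your other verifications go through unchanged after this replacement; note only that for $s=S+1$ the support condition is then obtained in the weaker form that $\varphi_{S+1}$ vanishes wherever some $f_s$ equals $1$ (in particular on $K$), which is the form actually needed later --- the literal inclusion $\supp\varphi_{S+1}\subset Z_{S+1}$ that your bump secured is in general incompatible with the global identity, by continuity at boundary points of $\bigcup_{s\le S}Z_s$.
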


\begin{proof}
By hypothesis, there exists some $z_{S+1}\in L\backslash \left( \bigcup\limits_{s=1}^S Z_s\right)$, since for all $s$, $\overline{Z_s}\subset \overline{\bigcup\limits_{r=1}^R U_r}$, which is a compact set. Now, we follow the argument from the proof of \cite[Theorem 2.13]{R}, but adapted to our case.

As $K\subset Z_1 \cup \ldots \cup Z_S$, for each $x\in K$, there exists a neighbourhood of $x$, $Y_x$, with compact closure $\overline{Y_x}\subset Z_s$ for some $s$. Consider $x_1, \ldots, x_p$ such that $K\subset Y_{x_1}\cup \ldots \cup Y_{x_p}$. For each $1\leq s\leq S$, let $H_s$ be the union of those $\overline{Y_{x_j}}$ which lie in $Z_s$, and if $M<s_0\leq S$, we take $H_{s_0}\cup \{z_{s_0}\}$, with $z_{s_0}\in Z_{s_0}\backslash \left( \bigcup\limits_{s\neq s_0} Z_s\right)$. Note that the sets $H_1,\ldots, H_M$ and $H_{M+1}\cup\{z_{M+1}\}, \ldots, H_{S}\cup\{z_{S}\}$ are non-empty. By Urysohn's Lemma, there are continuous functions $g_s\colon L\longrightarrow [0, 1]$ such that $g_s(H_s)\equiv 1$ and $g_s\big|_{L\backslash Z_s} \equiv 0$, for $1\leq s\leq M$, and $g_{s_0}(H_{s_0}\cup \{z_0\})\equiv 1$ and $g_{s_0}\big|_{L\backslash Z_{s_0}}\equiv 0$ for $M<s_0\leq S$. Define
\begin{align*}
\varphi_1 & := g_1,\\
\varphi_2 & := (1-g_1)g_2, \\
& \vdots \\
\varphi_S & := (1-g_1)(1-g_2)\cdots (1-g_{S-1})g_S
\end{align*}
Clearly, $\supp(\varphi_s)\subset Z_s$ for all $s=1,\ldots, S$, and we have that $$\varphi_1+\cdots+\varphi_S=1-(1-g_1)\cdots(1-g_S).$$
Since $K\subset H_1\cup \ldots \cup H_S$, for each $x\in K$, there exists $s=s(x)$ with $g_s(x)=1$, and also, for all $s=1, \ldots, M$, we have that $$\{ x\in L\colon  \varphi_s(x)\neq 0\} \subset \{ x\in L\colon  g_s(x)\neq 0\} \subset Z_s.$$
Therefore, the functions $\{ \varphi_1,\ldots, \varphi_M\}$ have disjoint support, and $\varphi_1+\cdots+\varphi_S\equiv 1$ on $K$.

We define $\varphi_{S+1}:=1-(\varphi_1+\cdots+\varphi_S)=(1-g_1)\cdots (1-g_S)$. Moreover, $K_m\subset Z_m$ for $m=1, \ldots, M$, and $K_m\cap Z_s=\emptyset$ for $m\neq s$, $m=1,\ldots, M$, $s=1,\ldots, S$. Hence,
$$\varphi_m(x)=\sum_{s=1}^S \varphi_s(x)=1, \quad \forall x\in K_m,\ m=1,\ldots, M.$$
On the other hand, if $M<s_0\leq S$, let $z_{s_0}\in Z_{s_0}\backslash \left( \bigcup\limits_{s\neq s_0} Z_s\right)$. We have that
$$\varphi_{s_0}(z_{s_0})=\sum_{s=1}^S \varphi_s(z_{s_0})=1,$$
and
\begin{equation*}
z_{S+1}\notin \bigcup\limits_{s=1}^{S} Z_s,\, \text{ thus } \varphi_{S+1}(z_{S+1})=1.\qedhere
\end{equation*}
\end{proof}

The next result is the version of the previous lemma for compact topological spaces.

\begin{lemma}\label{Lemma:PartUniComp}
Let $L$ be a compact space. Let $\{ K_1,\ldots, K_M\}$ be a  family of pairwise disjoint non-empty compact subsets of $L$. Given a family $\{ U_1, \ldots, U_R\}$ of relatively compact open subsets of $L$ that cover it, let $\{ Z_1, \ldots, Z_S\}$ be a family of open subsets of $L$ covering $K$ such that they satisfy the thesis of Lemma \ref{Lemma:Topol}. Then, there exists a partition of the unity subordinated to $\{ Z_s\}_{s=1}^{S}$, $\{ \varphi_s\}_{s=1}^{S}$, such that:
\begin{enumerate}
\item[(1)] $\{ \varphi_1,\ldots, \varphi_{M}\}$ have disjoint support.
\item[(2)] $\varphi_m(K_m)\equiv 1$, for $m=1,\ldots, M$.
\item[(3)] For all $M<s\leq S$, there exists $z_s\in Z_s$ such that $\varphi_s(z_s)=1$.
\item[(4)] For $s=1,\ldots,S$, $\supp(\varphi_s)\subset Z_s$.
\item[(5)] $(\varphi_1 + \cdots + \varphi_S)(x)=1$, for all $x\in K$.
\end{enumerate}
\end{lemma}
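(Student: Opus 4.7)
The plan is to adapt the proof of Lemma~\ref{Lemma:PartUni} with only cosmetic simplifications, since the compact case is essentially easier: there are no points of $L$ outside the open cover to worry about, so no auxiliary function $\varphi_{S+1}$ is needed.

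First, since $L$ is compact, the family $\{Z_1,\ldots,Z_S\}$ obtained from Lemma~\ref{Lemma:Topol} is an open cover of $L$ itself (taking $K=L$ when invoking Lemma~\ref{Lemma:Topol}). I would then reproduce the key preparatory step from the proof of Lemma~\ref{Lemma:PartUni}: for each $x\in L$, choose a neighbourhood $Y_x$ with compact closure contained in some $Z_{s(x)}$; by compactness of $L$, finitely many $Y_{x_1},\ldots,Y_{x_p}$ cover $L$; and for each $1\le s\le S$, let $H_s$ be the union of those $\overline{Y_{x_j}}$ contained in $Z_s$. For $M<s\le S$, I would take $H_s\cup\{z_s\}$, where $z_s\in Z_s\setminus\bigcup_{r\ne s} Z_r$ exists by conclusion (2) of Lemma~\ref{Lemma:Topol}, so that each $H_s$ (possibly enlarged) is non-empty.

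Next, applying Urysohn's Lemma on the compact Hausdorff space $L$, I would produce continuous functions $g_s\colon L\longrightarrow[0,1]$ vanishing on $L\setminus Z_s$ and taking value $1$ on the relevant $H_s$ (resp.\ $H_s\cup\{z_s\}$). Then define
\[
\varphi_1:=g_1,\qquad \varphi_s:=(1-g_1)(1-g_2)\cdots(1-g_{s-1})g_s\quad (2\le s\le S),
\]
so that $\varphi_1+\cdots+\varphi_S=1-(1-g_1)\cdots(1-g_S)$. Since every $x\in L$ lies in some $H_s$ by construction, at least one $g_s(x)$ equals $1$, forcing $(1-g_1)(x)\cdots(1-g_S)(x)=0$, and hence $\sum_{s=1}^S\varphi_s\equiv 1$ on all of $L$. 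This gives property~(5) and removes the need for a separate $\varphi_{S+1}$.

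The remaining verifications go exactly as in Lemma~\ref{Lemma:PartUni}: property~(4) is immediate from $\operatorname{supp}(\varphi_s)\subset\{g_s\ne 0\}\subset Z_s$; for property~(1), the disjointness of the supports of $\varphi_1,\ldots,\varphi_M$ follows since the $Z_1,\ldots,Z_M$ are pairwise disjoint by Lemma~\ref{Lemma:Topol}(1); property~(2) holds because for $x\in K_m$ ($m\le M$) and $s\ne m$ with $s\le S$ we have $x\notin Z_s$, so $\varphi_s(x)=0$ and $\varphi_m(x)=\sum_{s=1}^S\varphi_s(x)=1$; and property~(3) follows from $z_s\in H_s$ for $M<s\le S$, which forces $g_s(z_s)=1$ and $g_r(z_s)=0$ for $r<s$, so $\varphi_s(z_s)=1$. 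I do not anticipate a real obstacle here: the only subtlety is remembering to include the points $z_s$ inside the compacta where $g_s\equiv 1$, which is precisely what guarantees property~(3).
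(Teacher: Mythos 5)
Your proposal is correct and follows exactly the route of the paper, which proves this lemma by repeating the argument of Lemma~\ref{Lemma:PartUni} with $K=L$, noting that $Z_{S+1}=\emptyset$ and that $\varphi_{S+1}$ can be omitted. The details you spell out (the sets $H_s$, the points $z_s$, Urysohn's Lemma, and the telescoping definition of the $\varphi_s$) coincide with those of the paper's proof of Lemma~\ref{Lemma:PartUni}.
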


\begin{proof}
We can follow the proof of Lemma \ref{Lemma:PartUni} taking $K=L$ and adapting the steps from that proof, keeping in mind that now $Z_{S+1}=\emptyset$ (and hence there is not such a point $z_{S+1}$), and that the mapping $\varphi_{S+1}$ is  identically $0$, and hence, it can be omitted.
\end{proof}

The following result provides the promised approximation property of $C_0(L)$ spaces and their duals.

\begin{theorem}\label{theorem:AP_C0(L)}
Let $L$ be a locally compact space. Given $\{ f_1,\ldots , f_\ell \}\subset C_0(L)$ such that $\|f_j\|\leq 1$ for $j=1,\ldots, \ell$, and given $\{ \mu_1,\ldots, \mu_n\}\subset C_0(L)^*$ with $\| \mu_j\|\leq 1$ for $j=1,\ldots, n$, for each $\varepsilon >0$ there exists a norm one projection $P\colon C_0(L)\longrightarrow C_0(L)$ satisfying:
\begin{enumerate}
\item[(1)] $\| P^* (\mu_j) - \mu_j\| < \varepsilon$, for $j=1,\ldots, n$,
\item[(2)] $\| P (f_j) - f_j\| < \varepsilon$, for $j=1,\ldots, \ell$,
\item[(3)] $P(C_0(L))$ is isometrically isomorphic to $\ell_{\infty}^p$ for some $p\in \mathbb{N}$.
\end{enumerate}
\end{theorem}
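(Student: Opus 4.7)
The plan is to construct $P$ as a hybrid of ``averaging'' and ``interpolation'' blocks,
$$
P(f) = \sum_{m=1}^{M}\phi_m(f)\,\varphi_m \;+\; \sum_{s=M+1}^{S+1} f(z_s)\,\varphi_s,
$$
where $\{\varphi_s\}$ is a partition of unity produced by Lemma~\ref{Lemma:PartUni} (or Lemma~\ref{Lemma:PartUniComp} if $L$ is compact), and the $\phi_m$ are specific norm-one functionals to be chosen below. Since $\sum_s\varphi_s\equiv 1$ and each $\varphi_s$ attains the value $1$ at a point or compact set where every other $\varphi_t$ vanishes, one has $\|\sum_s a_s\varphi_s\|_\infty=\max_s|a_s|$, giving $\mathrm{Im}(P)\cong\ell_\infty^{S+1}$ isometrically (condition~(3)). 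Biorthogonality $\phi_m(\varphi_t)=\delta_{m,t}$ together with $\|\phi_m\|_{C_0(L)^*}\le 1$ will then yield $P^2=P$ and $\|P\|=1$.

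The key device is a common reference measure. Put $\lambda:=\sum_{j=1}^n|\mu_j|$, so each $\mu_j\ll\lambda$ with Radon--Nikodym derivative $h_j=d\mu_j/d\lambda$ satisfying $|h_j|\le 1$, and set $\eta:=\varepsilon/(2n+3)$. Applying Lusin's theorem simultaneously to all the $h_j$ and using the regularity of $\lambda$, I would extract pairwise disjoint compact sets $K_1,\dots,K_M\subset L$ together with scalars $c_{m,j}\in\mathbb{K}$ such that:
\begin{enumerate}[(a)]
\item $\lambda\bigl(L\setminus\bigcup_m K_m\bigr)<\eta$;
\item $|h_j(x)-c_{m,j}|<\eta$ for every $x\in K_m$ and every $j=1,\ldots,n$;
\item each $f_j$ has oscillation smaller than $\varepsilon$ on each $K_m$, and $|f_j|<\varepsilon$ outside some compact set $K\supset\bigcup_m K_m$.
\end{enumerate}
A relatively compact open cover $\{U_r\}_{r=1}^R$ of $K$ with $K_m\subset U_{r(m)}$ and each $U_r$ of $f_j$-oscillation below $\varepsilon$ is then fed into Lemmas~\ref{Lemma:Topol} and~\ref{Lemma:PartUni} to produce $\{\varphi_s\}_{s=1}^{S+1}$. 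Finally set $\phi_m:=\lambda|_{K_m}/\lambda(K_m)\in C_0(L)^*$ for $m\le M$; biorthogonality $\phi_m(\varphi_t)=\delta_{m,t}$ holds because $\varphi_t|_{K_m}\equiv\delta_{m,t}$ (by Lemmas~\ref{Lemma:Topol}(1) and~\ref{Lemma:PartUni}(1),(2),(4)), while for $s>M$ the functional $\delta_{z_s}$ satisfies $\delta_{z_s}(\varphi_t)=\varphi_t(z_s)=\delta_{s,t}$ (Lemma~\ref{Lemma:PartUni}(3) combined with $\sum_s\varphi_s\equiv 1$).

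Condition~(2) then follows from the identity
$$
P(f_j)(x)-f_j(x)=\sum_{m\le M}\bigl(\phi_m(f_j)-f_j(x)\bigr)\varphi_m(x)+\sum_{s>M}\bigl(f_j(z_s)-f_j(x)\bigr)\varphi_s(x),
$$
since each summand with $\varphi_m(x)>0$ or $\varphi_s(x)>0$ is smaller than $\varepsilon$ by the oscillation bound on the containing $Z_s\subset U_{r(s)}$, and off $K$ both $f_j$ and $P(f_j)$ are $\varepsilon$-small. For condition~(1), the $\phi_m$ have pairwise disjoint supports, so $\|\sum_m a_m\phi_m\|_{M(L)}=\sum_m|a_m|$. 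Using the explicit formula $P^*(\mu_j)=\sum_m\mu_j(\varphi_m)\phi_m+\sum_{s>M}\mu_j(\varphi_s)\delta_{z_s}$ one estimates
\begin{align*}
\Bigl\|\mu_j-\sum_{m=1}^M c_{m,j}\lambda(K_m)\phi_m\Bigr\| &\le \sum_{m=1}^M\int_{K_m}|h_j-c_{m,j}|\,d\lambda+|\mu_j|\bigl(L\setminus\textstyle\bigcup_m K_m\bigr)\le\eta(n+1),\\
\Bigl\|P^*(\mu_j)-\sum_{m=1}^M c_{m,j}\lambda(K_m)\phi_m\Bigr\| &\le \sum_{m=1}^M\bigl|\mu_j(\varphi_m)-c_{m,j}\lambda(K_m)\bigr|+\sum_{s>M}|\mu_j(\varphi_s)|\le\eta(n+2),
\end{align*}
where the tail bound $\sum_{s>M}|\mu_j(\varphi_s)|\le\int\sum_{s>M}\varphi_s\,d|\mu_j|\le|\mu_j|(L\setminus\bigcup_m K_m)<\eta$ crucially uses $\sum_{s\le M}\varphi_s\equiv 1$ on $\bigcup_m K_m$. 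Summing yields $\|P^*(\mu_j)-\mu_j\|\le(2n+3)\eta=\varepsilon$.

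The main obstacle is the dual approximation in total-variation norm: a pure interpolation projection $P(f)=\sum_s f(z_s)\varphi_s$ gives $P^*(\mu_j)=\sum_s\mu_j(\varphi_s)\delta_{z_s}$, which is purely discrete and hence cannot approximate a non-atomic $\mu_j$ in norm. The device of taking $\phi_m\propto\lambda|_{K_m}$ — which places non-discrete functionals into $\mathrm{Im}(P^*)$ — combined with the Lusin/Radon--Nikodym step forcing $d\mu_j/d\lambda$ to be nearly constant on each $K_m$, is precisely what salvages the dual approximation for arbitrary Radon measures.
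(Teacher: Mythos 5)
Your proposal is correct and takes essentially the same route as the paper: the projection you build is the same averaging-plus-point-evaluation operator $P(f)=\sum_m \phi_m(f)\varphi_m+\sum_{s>M}f(z_s)\varphi_s$ based on the partition of unity from Lemmas~\ref{Lemma:Topol} and~\ref{Lemma:PartUni}, with $\phi_m$ the normalized restriction of the control measure $\sum_j|\mu_j|$ to disjoint compacts on which the Radon--Nikod\'ym derivatives are nearly constant, and the verification of (1)--(3) follows the same mechanism. The only differences are cosmetic: you produce the compacts $K_m$ via Lusin's theorem instead of the paper's $L_1$-approximation of the derivatives by simple functions, and you bound $\|P^*(\mu_j)-\mu_j\|$ by a two-step triangle inequality rather than observing (as the paper does) that the discretized measures are exactly fixed by $P^*$; the slight slack in your constants (e.g.\ $\leq\varepsilon$ instead of $<\varepsilon$, and the $s=S+1$ term in (2)) is absorbed by shrinking $\eta$ and the oscillation thresholds.
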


Let us comment that this result extends \cite[Lemma 3.4]{DGMM} (which, actually, was itself an extension of \cite[Proposition 3.2]{ABGCCKLLM} and \cite[Proposition 3.2]{JW}). The main difference is that here we are able to deal with an arbitrary number of functions of $C_0(L)$ in (2), while in that lemma only one function is controlled, and besides, this was done with the help of an inclusion operator which is not the canonical one. However, this difference is crucial in order to apply Lemma~\ref{LemaBPBpnuK1} (or even its consequence Proposition~\ref{Prop:Projec}).

The following observation on the theorem is worth mentioning.

\begin{remark}\label{remark:pi-property}
Let us observe that by just conveniently ordering the obtained projections in Theorem~\ref{theorem:AP_C0(L)}, we actually get the following: {\slshape given a Hausdorff locally compact topological space $L$, there is a net $\{ P_{\alpha} \}_{\alpha\in \Lambda}$ of norm-one projections on $C_0(L)$, converging in the strong operator topology to the identity operator, such that $\{ P_{\alpha}^*\}_{\alpha\in \Lambda}$ converges in the strong operator topology to the identity on $C_0(L)^*$, and such that $P_\alpha(C_0(L))$ is isometrically isomorphic to a finite-dimensional $\ell_\infty$ space.}
\end{remark}

\begin{proof}[Proof of Theorem~\ref{theorem:AP_C0(L)}]
We will assume first that $L$ is not compact. Since $f_j\in C_0(L)$, $j=1,\ldots, \ell$, there exists a compact set $K_0\subset L$ such that
$$\sup_{j=1,\ldots,\ell}\{ |f_j(x)|\colon  x\in L\backslash K_0\} < \frac{\varepsilon}{4}.$$
For each $x\in K_0$, there exists a relatively compact open subset $U_x$ of $L$ containing $x$ and such that $$|f_j(x) - f_j(y)| < \frac{\varepsilon}{2}\quad \text{ for $y\in U_x$ and $j=1,\ldots, \ell$.}
$$
Therefore, $\{ U_x\}_{x\in K_0}$ is a cover of $K_0$, and so, there exist a finite subcover $\{ U_1,\ldots, U_{R-1}\}$ such that $K_0\subseteq U_1\cup \ldots \cup U_{R-1}$, and if $x, y\in U_r$ for some $r$, then $|f_j(x)-f_j(y)|<\frac{\varepsilon}{2}$, for $j=1,\ldots, \ell$.

We define $\mu:=\sum_{j=1}^n |\mu_j|\in C_0(L)^*$. Since for each $j\in \{1,\ldots,n\}$ $\mu_j$ is absolutely continuous with respect to $\mu$, by the Radon-Nikod\'ym Theorem, there exists $g_j\in L_1(\mu)$ such that $\mu_j=g_j \mu$, that is,
$$
\mu_j(f):= \int_L f \text{d} \mu_j = \int_L f(x) g_j(x) \text{d}\mu(x)\quad \text{ for all $f\in C_0(L)$.}
$$
Since the set of simple functions is dense in $L_1(\mu)$, we may choose a set of simple functions $\{ s_j\colon  j=1,\ldots, n\}$ such that $\| g_j-s_j\|_1<\frac{\varepsilon}{4}$ for $j=1,\ldots,n$.

Next, we consider a family $\{A_m\}_{m=1}^M$ of pairwise disjoint measurable sets with $\mu(A_m)>0$ for all $m$, such that each $A_m$ is contained in one of the elements of the following cover of $L$: $\{U_1, \ldots, U_{R-1}, L\backslash K_0\}$, and also  $\{\alpha_{m,j}\colon m=1,\ldots,M,\, j=1,\ldots, n\}$ such that $s_j=\sum_{m=1}^{M}\alpha_{m,j}\chi_{A_m}$. This cover satisfies that if $x, y\in L\backslash K_0$, or if $x,y\in U_r$, then $|f_j(x)-f_j(y)|<\frac{\varepsilon}{2}$ for all $j=1,\ldots,\ell$ and all $r=1,\ldots, R-1$. Let $C>\max\{|\alpha_{m,j}|\colon  m=1,\ldots,M,\, j= 1,\ldots,n\}$.

Since $\mu$ is regular, for each $1\leq m\leq M$, we can find a compact set $K_m\subset A_m$ such that $\mu(A_m\backslash K_m)<\frac{\varepsilon}{4MC}$ and $\mu(K_m)>0$ for all $m=1,\ldots, M$.

Let $K=K_0\cup K_1\cup\ldots \cup K_M$. As $K\backslash \left(\bigcup\limits_{r=1}^{R-1} U_r\right)$ is a compact subset of $L$, we can cover it with finitely many relatively compact open subsets of $L\backslash K_0$ that we will denote $U_R,U_{R+1},\ldots,U_P$. If we now apply Lemmas \ref{Lemma:Topol} and \ref{Lemma:PartUni} to the family $\{ U_1,\ldots, U_P\}$ and the compacts $\{ K_1,\ldots, K_M\}$ and $K$, we obtain a refinement of relatively compact open subsets of $L$, $\{ Z_1,\ldots, Z_S\}$ with $K_m\subset Z_m$ for $m=1,\ldots, M$ and $\{ Z_1,\ldots Z_M\}$ pairwise disjoint, and defining $Z_{S+1}$ to be the set $L\backslash \left( \bigcup\limits_{s=1}^S Z_s\right)$, we also have a partition of the unity subordinated to $\{ Z_s\}_{s=1}^{S}$, $\{ \varphi_s\}_{s=1}^{S+1}$, such that:
\begin{enumerate}
\item[(i)] $\{ \varphi_1,\ldots,\varphi_M\}$ have disjoint support.
\item[(ii)] $\varphi_m(K_m)\equiv 1$ for $m=1,\ldots, M$.
\item[(iii)] For all $M<s\leq S+1$, there exists $z_s\in Z_s$ such that $\varphi_s(z_s)=1$.
\item[(iv)] For $s=1,\ldots,S+1$, $\supp(\varphi_s)\subset Z_s$.
\item[(v)] $(\varphi_1+\ldots+\varphi_S)(K)\equiv 1$.
\end{enumerate}
Now, we define $P\colon C_0(L)\longrightarrow C_0(L)$ by
$$P(f):= \sum_{m=1}^M \frac{1}{\mu(K_m)} \left( \int_{K_m} f \, \text{d} \mu \right) \varphi_m + \sum_{s=M+1}^{S+1} f(z_s)\varphi_s,\quad \text{ for all $f \in C_0(L)$.}
$$
Let us first check that (2) holds, that is, that $\| P (f_j) - f_j\|<\varepsilon$ for all $j=1,\ldots, \ell$. Let $x\in L$. We will distinguish two cases:
\begin{itemize}
\item Case 1: if $x\in \bigcup\limits_{m=1}^M Z_m$, then there exists exactly one $m_0$ such that $x\in Z_{m_0}$. Then, for each $j=1,\ldots, \ell$, we have:
\begin{align*}
|P(f_j)(x)-f_j(x)| &= \left|P(f_j)(x) - \sum_{m=1}^M f_j(x)\varphi_m(x) - \sum_{s=M+1}^{S+1} f_j(x) \varphi_s(x)\right|  \\
&\leq \underbrace{\left|\frac{1}{\mu(K_{m_0})}\left( \int_{K_{m_0}} f_j(y) \, \text{d}\mu(y)\right) - f_j(x)\right|\varphi_{m_0}(x)}_{\text{(I)}} +\\
&\quad + \underbrace{\sum_{s=M+1}^{S+1} |f_j(x) - f_j(z_s)| \varphi_s(x)}_{\text{(II)}}.
\end{align*}
For $\text{(I)}$, we have
\begin{equation*}
\text{(I)} =\left|\frac{1}{\mu(K_{m_0})}\left( \int_{K_{m_0}} (f_j(y)-f_j(x)) \, \text{d}\mu(y)\right)\right|\varphi_{m_0}(x) \leq \frac{1}{\mu(K_{m_0})} \int_{K_{m_0}} \frac{\varepsilon}{2} \, \text{d} \mu(y) = \frac{\varepsilon}{2}.
\end{equation*}
Now, for $\text{(II)}$, let $s\in \{ M+1,\ldots, S+1\}$. Note that if $x\notin Z_s$, then $\varphi_s(x)=0$, and if $x\in Z_s$, we have that $|f_j(x) - f_j(z_s)|< \frac{\varepsilon}{2}$ and $\sum_{s=M+1}^{S+1}\varphi_s(x)\leq 1$, and so,
$\text{(II)} < \frac{\varepsilon}{2}$. Therefore, $|P(f_j)(x) - f_j(x)|< \varepsilon$ for all $x\in \bigcup\limits_{m=1}^M Z_m$, for all $j=1,\ldots, \ell$.
\item Case 2: if $x\notin \bigcup\limits_{m=1}^M Z_m$, then for each $j=1,\ldots,\ell$, we have
$$|P(f_j)(x) - f_j(x)| = \left| \sum_{s=M+1}^{S+1} (f_j(x) - f_j(z_s)) \varphi_s(x)\right| < \frac{\varepsilon}{2}$$ as in item $\text{(II)}$ of the previous case.
\end{itemize}
Summarizing, we get $\| P(f_j) - f_j\| < \varepsilon$ for all $j=1,\ldots, \ell$, getting thus (2).

Now we check (1), that is, that $\| P^*(\mu_j) - \mu_j\| < \varepsilon$ for all $j=1,\ldots,n$. Indeed, first observe that if $\nu$ is a regular Borel (real or complex) measure on $L$, its associated $x_\nu^*\in C_0(L)^*$ is defined as
$$x_\nu^*(f):= \int_L f(x) \, \text{d} \nu(x),\quad \forall f\in C_0(L),$$ and we identify $x_\nu^*\equiv \nu$. In our case, we have that
\begin{align*}
P^*(\nu)(f) &= \int_L P(f)(x) \, \text{d} \nu(x) \\
&= \int_L \left( \sum_{m=1}^M \frac{1}{\mu(K_m)} \left( \int_{K_m} f \, \text{d} \mu \right) \varphi_m(x) \right) \, \text{d} \nu(x) + \int_L \left( \sum_{s=M+1}^{S+1} f(z_s) \varphi_s(x) \right) \, \text{d} \nu(x) \\
&= \sum_{m=1}^M \frac{1}{\mu(K_m)} \left( \int_{K_m} f \, \text{d} \mu \right) \int_L \varphi_m(x)\, \text{d} \nu(x) + \sum_{s=M+1}^{S+1} f(z_s) \int_L \varphi_s(x) \, \text{d} \nu(x).
\end{align*}
In particular, if $\supp(\nu)\subset \bigcup\limits_{m=1}^{M} K_m$, then by Lemma \ref{Lemma:Topol}.(1)
$$\sum_{s=M+1}^{S+1} f(z_s) \int_L \varphi_s(x) \, \text{d} \nu(x) \equiv 0,\quad \forall f\in C_0(L).$$

Let now $\nu_j:= t_j \mu$, where $t_j:= \sum_{m=1}^M \alpha_{m,j} \chi_{K_m}$, for all $j=1,\ldots, n$, that is,
$$\nu_j(f)= \int_L f(x) \left( \sum_{m=1}^M \alpha_{m,j} \chi_{K_m}(x) \right) \, \text{d} \mu(x), \quad \forall f\in C_0(L).$$
It holds that $P^*(\nu_j)=\nu_j$ for $j=1,\ldots, n$. Indeed, as $\supp(\nu_j)\subset \bigcup\limits_{m=1}^{M}K_m$, we have
\begin{align*}
P^*(\nu_j)(f) &= \sum_{m=1}^M \frac{1}{\mu(K_m)} \left( \int_{K_m} f\, \text{d} \mu \right) \int_L \varphi_m(x) \left( \sum_{l=1}^M \alpha_{l,j} \chi_{K_l}(x) \right) \, \text{d} \mu(x) \\
&= \sum_{m=1}^M \frac{1}{\mu(K_m)} \left( \int_{K_m} f\, \text{d} \mu \right) \underbrace{\int_L \alpha_{m,j} \chi_{K_m}(x) \, \text{d} \mu(x)}_{\alpha_{m,j} \mu(K_m)} \\
&= \int_L f(x) \left( \sum_{m=1}^M \alpha_{m,j} \chi_{K_m}(x) \right) \, \text{d} \mu(x) = \nu_j(f)
\end{align*}
for all $f\in C_0(L)$ and all $j=1,\ldots, n$.

Now, we know that $\| P^*\| = \| P \| \leq 1$ and, since $P(\varphi_j)=\varphi_j$ for $j=1,\ldots,n$, we get that $\| P^*\|=1$. Therefore, since $P^*(\nu_j)=\nu_j$, we get
\begin{align*}
\| P^*(\mu_j) - \mu_j \| &\leq \| P^*(\mu_j - \nu_j) \| + \| \nu_j - \mu_j\| \\ &\leq \| P^* \| \cdot \| \mu_j - \nu_j \| + \| \mu_j - \nu_j \| \leq 2 \| \mu_j - \nu_j \|.
\end{align*}
But we have
\begin{align*}
\| \mu_j - \nu_j \| & = \| g_j \mu - t_j \mu \| \leq \| g_j \mu - s_j\mu \| + \| s_j\mu - t_j \mu \| \\ &= \| g_j - s_j \|_1 + \| s_j - t_j \|_1 < \frac{\varepsilon}{4} + \frac{\varepsilon}{4} = \frac{\varepsilon}{2},
\end{align*}
since
\begin{align*}
\| s_j - t_j\|_1 &= \int_L \left| \sum_{m=1}^M \alpha_{m,j} \chi_{A_m} - \sum_{m=1}^M \alpha_{m,j} \chi_{K_m} \right| \, \text{d}\mu  \\
&\leq \sum_{m=1}^M \underbrace{|\alpha_{m,j}|}_{\leq C} \mu(A_m \backslash K_m) < \frac{MC\varepsilon}{4 M C} = \frac{\varepsilon}{4},
\end{align*}
for all $j=1,\ldots, n$. Hence,
$$
\| P^*(\mu_j) - \mu_j\| \leq 2 \| \mu_j - \nu_j\| < 2\frac{\varepsilon}{2}=\varepsilon \quad \text{ for $j=1,\ldots, n$.}
$$
Let us finish the proof by checking (3). As $\mu(K_m)>0$, we have $K_m\neq \emptyset$, $m=1,\ldots, M$. Hence, we have that $z_s\in Z_s$ for $s=1,\ldots, S+1$ and that $z_{s_0}\notin \bigcup\limits_{s\neq s_0} Z_s$ for all $s_0=1,\ldots, S+1$. By the definition of $P$, we have that $P(C_0(L))=\operatorname{span} \{ \varphi_s\colon  s=1,\ldots, S+1\}$ and we will be done by proving the following equality:
$$
\bigl\| a_1 \varphi_1 + \cdots + a_{S+1} \varphi_{S+1}\bigr\|_{\infty} = \max \{ |a_1|,\ldots, |a_{S+1}|\} = \| a\|_{\infty}
$$
for every $a=(a_1,\ldots, a_{S+1})$. Indeed, for $x\in L$
$$
\bigl|a_1 \varphi_1(x) + \cdots + a_{S+1}\varphi_{S+1}(x)\bigr| \leq \| a \|_{\infty} \sum_{s=1}^{S+1} \varphi_s(x) = \| a \|_{\infty}.
$$
But for each $s$,
$$
\bigl|a_1 \varphi_1(z_s) + \cdots + a_{S+1}\varphi_{S+1}(z_s)\bigr| = |a_s|,$$
and then,
$$
\bigl\| a_1 \varphi_1 + \ldots + a_{S+1} \varphi_{S+1}\bigr\|_{\infty} \geq \| a\|_\infty.
$$
Hence, the mapping $\rho\colon  \ell_{\infty}^{S+1} \longrightarrow C_0(L)$ given by
$$(a_1,\ldots, a_{S+1}) \longmapsto a_1 \varphi_1 + \ldots + a_{S+1}\varphi_{S+1}$$
is an isometry, and therefore, $P(C_0(L))$  is isometrically isomorphic to  $\ell_{\infty}^{S+1}$.

Now, for the case when $L$ is compact, by taking $K_0=L$ and using Lemma \ref{Lemma:PartUniComp} instead of Lemma \ref{Lemma:PartUni}, a similar proof is valid, except that now all the elements depending on $S+1$ will vanish in the proof: here we get $Z_{S+1}=\emptyset$ (hence $z_{S+1}$ does not exist), $\varphi_{S+1}\equiv 0$ (and hence it can be omitted), and so, the vector $a$ will only have $S$ components; therefore $P(C_0(L))$  is isometrically isomorphic to $\ell_{\infty}^S$ in this case.
\end{proof}

We are now ready to prove the main result of the paper.

\begin{proof}[Proof of Theorem \ref{TheoremMain}]
Let $f_1,\ldots, f_{\ell}\in B_{C_0(L)}$, $\mu_1,\ldots, \mu_n\in B_{(C_0(L))^*}$ and $\varepsilon>0$ be given. Let $P\colon C_0(L)\longrightarrow C_0(L)$ be the projection from Theorem \ref{theorem:AP_C0(L)}, which satisfies that $P(C_0(L))$ is isometrically isomorphic to $\ell_\infty^p$ for some $p\in \mathbb{N}$. Let $\widetilde{P}\colon X\longrightarrow \widetilde{P}(C_0(L))$ be the operator such that $\widetilde{P}(f)=P(f)$ for all $f\in C_0(L)$, and let $i\colon \widetilde{P}(C_0(L)) \longrightarrow C_0(L)$ be the natural inclusion. Let $\eta$ be the mapping with which all $\ell_\infty^n$ spaces has the BPBp-nu for compact operators (see Corollary \ref{Corc0}). Since $n(C_0(L))=1$ and $n(\ell_p^n)=1$ for all $n\in \mathbb{N}$ (see \cite[Proposition 1.11]{KLMM} for instance), in particular, $n_k(P(C_0(L)))=n_k(C_0(L))=1$. Therefore, we are in the conditions to apply Lemma \ref{LemaBPBpnuK1} and get that $C_0(L)$ has the BPBp-nu for compact operators, as desired.

Alternatively, by Remark~\ref{remark:pi-property}, we may prove Theorem~\ref{TheoremMain} applying Proposition~\ref{Prop:Projec} instead of Lemma \ref{LemaBPBpnuK1}.
\end{proof}

\vspace*{0.5cm}

\noindent \textbf{Acknowledgment.} The authors would like to thank Bill Johnson for kindly answering several inquiries.

\end{document}